\documentclass[11pt,reqno, a4paper]{amsart}
\usepackage{amsmath}
\usepackage{pifont}
\usepackage{}
\usepackage{amsfonts}
\usepackage{amssymb}
\usepackage{txfonts}
\usepackage{mathrsfs}
\usepackage{bbm}
\usepackage{enumerate}
\usepackage{color}

\theoremstyle{plain}
\newtheorem{theorem}{Theorem}[section]
\newtheorem{remark}{Remark}[section]
\newtheorem{lemma}{Lemma}[section]
\newtheorem{definition}{Definition}[section]
\newtheorem{proposition}{Proposition}[section]

\newtheorem{Example}{Example}[section]
\pagestyle{plain}

\numberwithin{equation}{section}
\begin{document}

\title{Sharp regularity and Cauchy problem of the spatially homogeneous Boltzmann equation with Debye-Yukawa potential}

\author{L\'eo Glangetas,\,\,Hao-Guang Li}
\address{L\'eo Glangetas,
\newline\indent
Universit\'e de Rouen, CNRS UMR 6085, Math\'ematiques
\newline\indent
76801 Saint-Etienne du Rouvray, France}
\email{leo.glangetas@univ-rouen.fr}
\address{Hao-Guang Li,
\newline\indent
School of mathematics and statistics, South-central university for nationalities 430074,
\newline\indent
Wuhan, P. R. China}
\email{lihaoguang@mail.scuec.edu.cn}

\subjclass[2000]{35Q20, 36B65}

\keywords{Boltzmann equation, spectral decomposition, Debye-Yukawa potential}

\begin{abstract}
In this paper, we study the Cauchy problem for the linear spatially homogeneous Boltzamnn equation with Debye-Yukawa potential.
Using the spectral decomposition of the linear operator,
we prove that, for an initial datum in the sense of distribution which contains the dual of the Sobolev spaces, there exists a unique solution which belongs to a more regular Sobolev space for any positive time. We also study the sharp regularity of the solution.
\end{abstract}

\maketitle
\section{Introduction and main results}
In this work, we consider the spatially homogeneous Boltzmann equation
\begin{equation}\label{eq1.10}
\frac{\partial f}{\partial t} = Q(f,f)
\end{equation}
where $f=f(t,v)$ is the density distribution function depending only on two varia\-bles $t\geq0$~and $v\in\mathbb{R}^{3}$. The Boltzmann bilinear collision operator is given by
\begin{equation*}
Q(g,f)(v)=\int_{\mathbb{R}^{3}}\int_{S^{2}}B(v-v_{\ast},\sigma)\left(g(v_{\ast}^{\prime})f(v^{\prime})-g(v_{\ast})f(v)\right)dv_{\ast}d\sigma,
\end{equation*}
where for $\sigma\in \mathbb{S}^{2}$,~the symbols~$v_{\ast}^{\prime}$~and~$v^{\prime}$~are abbreviations for the expressions,
$$
v^{\prime}=\frac{v+v_{\ast}}{2}+\frac{|v-v_{\ast}|}{2}\sigma,\,\,\,\,\, v_{\ast}^{\prime}
=\frac{v+v_{\ast}}{2}-\frac{|v-v_{\ast}|}{2}\sigma,
$$
which are obtained in such a way that collision preserves momentum and kinetic energy,~namely
$$
v_{\ast}^{\prime}+v^{\prime}=v+v_{\ast},\,\,\,\,\, |v_{\ast}^{\prime}|^{2}+|v^{\prime}|^{2}=|v|^{2}+|v_{\ast}|^{2}.
$$
For monatomic gas, the collision cross section $B(v-v_{\ast},\sigma)$ is a non-negative function which~depends only on $|v-v_{\ast}|$ and $\cos\theta$
which is defined through the scalar product in $\mathbb{R}^{3}$ by
$$\cos\theta=\frac{v-v_{\ast}}{|v-v_{\ast}|}\cdot\sigma.$$
Without loss of generality, we may assume that $B(v-v_{\ast},\sigma)$ is supported on the set $\cos\theta\geq0,$ i.e. where $0\leq \theta\leq\frac{\pi}{2}$.
See for example \cite{NYKC1}, \cite{Villani} for more explanations about the support of $\theta$.
For physical models, the collision cross section usually takes the form
\begin{equation*}%\label{B}
B(v-v_{\ast},\sigma)=\Phi(|v-v_{\ast}|)b(\cos\theta),
\end{equation*}
with a kinetic factor
$$\Phi(|v-v_{\ast}|)=|v-v_{\ast}|^{\gamma},\,\,\gamma\in]-3,+\infty[.$$
The molecules are said to be Maxwellian when the parameter $\gamma=0$.

Except for the hard sphere model, the function $b(\cos\theta)$ has a singularity at $\theta=0.$
For instance, in the important model case of the inverse-power potentials,
$$\phi(\rho)=\frac{1}{\rho^r}, \,\,\text{with}\,\, r>1,$$
with $\rho$ being the distance between two interacting particles
in the physical 3-dimensional space $\mathbb{R}^{3}$,
\begin{equation*}%\label{bs}
b(\cos\theta)\sin\theta\approx K\theta^{-1-\frac{2}{r}}, \,\,\text{as}\,\,\theta\rightarrow0^+,
%b(\cos\theta)\sin\theta\approx K\theta^{-1-2\alpha}, \,\,\text{as}\,\,\theta\rightarrow0^+.
\end{equation*}
The notation $a\approx b$ means that there exist positive constants $C_2>C_1>0$, such that
$$C_1 \, a\leq b\leq C_2 \, a.$$
Notice that the Boltzmann collision operator is not well defined for the case when $r=1$ corresponding to the Coulomb potential.

If the inter-molecule potential satisfies the Debye-Yukawa type potentials where the potential function is given by
$$\phi(\rho)=\frac{1}{\rho\, e^{\rho^s}},\,\,\text{with}\,\,s>0,$$
then the collision cross section has a singularity in the following form
\begin{equation}\label{b}
b(\cos\theta)\approx \theta^{-2}(\log\theta^{-1})^{\frac{2}{s}-1},\,\,\text{when}\,\,\theta\rightarrow 0^+,  \,\,\text{with}\,\,s>0.
\end{equation}
This explicit formula was first appeared in the Appendix in \cite{YSCT}.
In some sense, the Debye-Yukawa type potentials is a model between the Coulomb potential corresponding to $s=0$ and the inverse-power potential. For further details on the physics background and the derivation of the Boltzmann equation, we refer the reader to the extensive expositions \cite{Cerci}, \cite{Villani}.

We linearize the Boltzmann equation near the absolute Maxwellian distribution
$$
\mu(v)=(2\pi)^{-\frac 32}e^{-\frac{|v|^{2}}{2}}.
$$
Let $f(t,v)=\mu(v)+\sqrt{\mu}(v)g(t,v)$. Plugging this expression into $\eqref{eq1.10}$, we have
$$
\frac{\partial g}{\partial t}+\mathcal{L}[g]={\bf \Gamma}(g, g)
$$
with
$$
{\bf \Gamma}(g, h)=\frac{1}{\sqrt{\mu}}Q(\sqrt{\mu}g,\sqrt{\mu}h),\,\,
\mathcal{L}(g)=-\frac{1}{\sqrt{\mu}}[Q(\sqrt{\mu}g,\mu)+Q(\mu,\sqrt{\mu}g)].
$$
Then the Cauchy problem \eqref{eq1.10} can be re-writed in the form
\begin{equation*}
\left\{ \begin{aligned}
         &\partial_t g+\mathcal{L}(g)={\bf \Gamma}(g, g),\,\\
         &g|_{t=0}=g_{0}.
\end{aligned} \right.
\end{equation*}
In the present work, we consider the linearized Cauchy problem
\begin{equation} \label{eq-1}
\left\{ \begin{aligned}
         &\partial_t g+\mathcal{L}(g)=0,\,\\
         &g|_{t=0}=g_{0}.
\end{aligned} \right.
\end{equation}

In the case of the inverse-power potential with $r>1$, 
the regularity of the Boltzmann equation has been studied by numerous papers.  
It is well known that the non-cutoff spatially homogeneous Boltzmann equation
enjoys an $\mathscr{S}(\mathbb{R}^3)$-regularizing effect 
for the weak solutions to the Cauchy problem (see \cite{DW,YSCT}).
Regarding the Gevrey regularity, Ukai showed in \cite{Ukai} 
that the Cauchy problem for the Boltzmann equation 
has a unique local solution in Gevrey classes. 
Then, Desvillettes, Furioli and Terraneo proved in \cite{DFT}
the propagation of Gevrey regularity for solutions 
of the Boltzmann equation with Maxwellian mole\-cules. 
For mild singularities, Morimoto and Ukai proved in \cite{MU} 
the Gevrey regularity of smooth Maxwellian decay solutions 
to the Cauchy problem of the spatially homogeneous Boltzmann equation 
with a modified kinetic factor. 
See also \cite{TZ} for the non-modified case.
On the other hand, Lekrine and Xu proved in \cite{L-X} 
the property of Gevrey smoothing effect for the weak solutions 
to the Cauchy problem associated 
to the radially symmetric spatially homogeneous Boltzmann equation 
with Maxwellian molecules for $r>2$. 
This result was then completed by Glangetas and Najeme 
who established in \cite{G-N} the analytic smoothing effect 
in the case when $1<r<2$.
In \cite{ NYKC1, HAOLI}, the solutions of the Cauchy problem \eqref{eq-1} 
for linearized non-cutoff Boltzmann equation belong 
to the symmetric Gelfand-Shilov spaces $S^{r/2}_{r/2}(\mathbb{R}^3)$
for any positive time and
$$
\|e^{ct \mathcal{H}^{\frac{1}{r}}}g(t)\|_{L^2}\leq\,C\|g_0\|_{L^2},
$$
where %$\mathcal{H}=-\triangle +\frac{|v|^2}{4}$, $0 < s <1$.
\begin{equation*}%\label{cal H^s}
  \mathcal{H}=-\triangle +\frac{|v|^2}{4}.
\end{equation*}
The Gelfand-Shilov space $S^{\nu}_{\nu}(\mathbb{R}^3)$
with $\nu\geq\frac{1}{2}$ can be identify with
$$
S^{\nu}_{\nu}(\mathbb{R}^{3})=\left\{f\in C^\infty (\mathbb{R}^3);  \exists \tau>0,
\|e^{\tau \, \mathcal{H}^{\frac{1}{2\nu}}}f\|_{L^2}<+\infty\right\}.
$$
This space can also be characterized as
is the space of smooth functions $f\in\,C^{+\infty}(\mathbb{R}^3)$ satisfying
(see Appendix \ref{Appendix}):
$$
\exists\, A>0,\, C>0,\,
\sup_{v\in\mathbb{R}^3}|v^{\beta}\partial^{\alpha}_vf(v)|
\leq\,CA^{|\alpha|+|\beta|}
(\alpha!)^{\nu}(\beta!)^{\nu},\,\,\forall\,\alpha,\,\beta\in\mathbb{N}^3.
$$

The linear Boltzmann operator $\mathcal{L}$ is shown to be diagonal in the basis of $L^2(\mathbb{R}^3)$ and this property has been used in  \cite{NYKC3} and \cite{GLX_2015} to prove that the Cauchy problem to the non-cutoff spatially homogeneous Boltzmann equation with the small initial datum $g_0\in L^2(\mathbb{R}^3)$ has a global solution, which belongs to the Gelfand-Shilov class $S^{r/2}_{r/2}(\mathbb{R}^3)$.

The initial value problem in a space of probability
measures defined via the Fourier transform
has been studied in \cite{CK} and \cite{Morimoto2}.
Recently, the case of initial datum in the sense of distribution,
which contains the dual space of a Gelfand-Shilov class
for the linear case has been studied in \cite{HAOLI}.

In this paper, we consider the collision kernel in the Maxwellian molecules case and the angular function $b$ satisfying the Debye-Yukawa potential \eqref{b} for some $s>0$.
For convenience, we denote
\begin{equation}\label{beta}
\beta(\theta)=2\pi b(\cos\theta)\sin\theta.
\end{equation}
We study the smoothing effect for the Cauchy problem \eqref{eq-1} associated to the non-cutoff
spatially homogeneous Boltzmann equation with Debye-Yukawa potential \eqref{b}.
The singularity of the collision kernel $b$
endows the linearized Boltzmann operator $\mathcal{L}$ with
the logarithmic Gelfand-Shilov regularity property (see Proposition \ref{regular}).
The logarithmic regularity theory was first introduced in \cite{Morimoto} on the hypoellipticity of the infinitely degenerate elliptic operator and was developed in \cite{Morimoto-Xu1},\cite{Morimoto-Xu2} on the logarithmic Sobolev estimates.
Recently, for $0<s<2$, the initial datum $f_0\geq0$ and
$$\int_{\mathbb{R}^3}f_0(v)(1+|v|^2+\log(1+f_0(v)))dv<+\infty,$$
Morimoto, Ukai, Xu and Yang in \cite{YSCT} show that 
the weak solution of the Cauchy problem \eqref{eq1.10} satisfying
$$\sup_{t\in[0,T]}\int_{\mathbb{R}^3}f(t,v)(1+|v|^2+\log(1+f(t,v)))dv<+\infty$$
enjoys a $H^{+\infty}(\mathbb{R}^3)$ smoothing effect property.

In order to precise the regularity of the solution of the Cauchy problem,
we introduce some functional spaces.
The linear operator $\mathcal{L}$ is nonnegative (\cite{NYKC1,NYKC2,NYKC3}),\,
with the null space
$$
\mathcal{N}=\text{span}\left\{\sqrt{\mu},\,\sqrt{\mu}v_1,\,\sqrt{\mu}v_2,\,
\sqrt{\mu}v_3,\,\sqrt{\mu}|v|^2\right\}.
$$
Denote by $\mathbf{P}$ the orthoprojection from $L^2(\mathbb{R}^{3})$
into $\mathcal{N}$.\,\,Then
$$
(\mathcal{L}g,\,g)=0\Leftrightarrow g=\mathbf{P}g
$$
and the operator $\mathcal{L}+\mathbf{P}$ is a positive self-adjoint operator.
We define $$D(\mathcal{L})=\bigcup_{\tau>0}D_{\tau}(\mathcal{L})$$
with
\begin{equation*}%\label{definition  D}
D_{\tau}(\mathcal{L})=\left\{u\in C^\infty(\mathbb{R}^3),\,\,
\sum^{+\infty}_{k=0}\tau^k(k!)^{-1}\|(\mathcal{L}+\mathbf{P})^\frac{k}{2}u\|^2_{L^2}<+\infty\right\},
\end{equation*}
which is a Banach space with the norm
$$
\|u\|^2_{D_{\tau}(\mathcal{L})}=\sum^{+\infty}_{k=0}\tau^k(k!)^{-1}\|(\mathcal{L}+\mathbf{P})^\frac{k}{2}u\|^2_{L^2}.
$$
Analogously, we define
$$D^+(\mathcal{L})=\bigcup_{\tau>0}D^+_{\tau}(\mathcal{L})$$
where
$$
D^+_{\tau}(\mathcal{L})=\left\{u\in C^\infty(\mathbb{R}^3),\,\,\sum^{+\infty}_{k=0}\tau^k(k!)^{-1}\|(\mathcal{L}+\mathbf{P})^\frac{k+1}{2}u\|^2_{L^2}<+\infty\right\}
$$
with the norm
$$\|u\|^2_{D^+_{\tau}(\mathcal{L})}=\sum^{+\infty}_{k=0}\tau^k(k!)^{-1}\|(\mathcal{L}+\mathbf{P})^\frac{k+1}{2}u\|^2_{L^2}.$$
The distribution spaces
$(D(\mathcal{L}))',\,
(D^+(\mathcal{L}))'
$
are defined as follows
 $$(D(\mathcal{L}))'=\bigcup_{\tau>0}(D_{\tau}(\mathcal{L}))'
 ,\quad (D^+(\mathcal{L}))'=\bigcup_{\tau>0}(D^+_{\tau}(\mathcal{L}))'
 $$
where $(D_{\tau}(\mathcal{L}))'$, $(D^+_{\tau}(\mathcal{L}))'$ are the dual spaces of $D_{\tau}(\mathcal{L})$ and $D^+_{\tau}(\mathcal{L})$.
Now we begin to present our result.
Firstly, we give the definition of the weak solution 
of the Cauchy problem \eqref{eq-1}.
\begin{definition} For any $g_0\in \left(D(\mathcal{L})\right)'$, $T>0$,
g(t,v) is a weak solution of the Cauchy problem \eqref{eq-1} if\,\,$\mathbf{P}g\equiv\mathbf{P}g_0,$
\begin{align}\label{solution}
&g\in L^{\infty}\bigl( [0,T], \left(D(\mathcal{L})\right)' \bigr)
\cap 
H^1\bigl([0,T],\left(D^+(\mathcal{L})\right)'\bigr),\nonumber
\\
&\mathcal{L}^{\frac{1}{2}} g\in L^{2}\left([0,T],\left(D(\mathcal{L})\right)'\right),
\end{align}
and for any $\phi\in C^1\bigl([0,T],\,\, C_{0}^\infty(\mathbb{R}^{3})\bigr)$
we have
\begin{align}\label{def}
\forall t\in[0,T],\quad
&\langle g(t),\phi(t)\rangle-\langle g_0,\phi(0)\rangle\nonumber\\
&=\int^{t}_{0}\langle g,\partial_{\tau}\phi\rangle d\tau
- \int^{t}_{0}\langle g,\mathcal{L}\phi\rangle d\tau.
\end{align}
\end{definition}
In the main theorem, we consider the initial distribution data case, 
which is given in the following.
\begin{theorem}\label{trick}
Assume that the Debye-Yukawa potential $b(\,\cdot\,)$ 
is given in~$\eqref{b}$ with $s>0$
and $g_0\in\, \left(D(\mathcal{L})\right)'$.
Therefore the Cauchy problem \eqref{eq-1}~admits a unique weak solution.
Moreover there exist $t_0>0$ and $c_0>0$ such that for all $ t\ge t_0/c_0$
\begin{align*}%\label{rate}
&\|e^{- t_0 \, \left(\log(e+\mathcal{H})\right)^{\frac{2}{s}}}g_0 \|_{L^2} < +\infty,\nonumber
\\
&\|e^{tc_0\left(\log(\mathcal{H}+e)\right)^{\frac{2}{s}}}
(\mathbf{I}-\mathbf{P})g(t) \|_{L^2}
\leq
e^{-\frac{1}{4}\lambda_{2,0}t}
\|e^{- t_0 \, \left(\log(e+\mathcal{H})\right)^{\frac{2}{s}}}
  (\mathbf{I}-\mathbf{P}) g_0 \|_{L^2}
%e^{-\lambda_{2,0}t}\|(\mathbf{I}-\mathbf{P})g_{0}\|_{E^{-t_0}_s(\mathbb{R}^3)},
\end{align*}
where $\mathcal{H}=-\Delta+{\textstyle \frac{|v|^2}{4}}$,
$$\lambda_{2,0}=\int^{\pi/4}_0\beta( \theta)(1-\sin^4\theta-\cos^4\theta)d\theta>0.$$
In particular, for all $t>t_0/c_0$,
the solution $g(t)$ belongs to $L^2(\mathbb{R}^3)$.
\end{theorem}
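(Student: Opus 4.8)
\smallskip

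\noindent\emph{Proof strategy.}
The backbone is the spectral decomposition of $\mathcal{L}$. By Proposition~\ref{regular} (as used in \cite{HAOLI,GLX_2015,NYKC3}) there is an orthonormal basis $\{\varphi_{n,l,m}\}$ of $L^2(\mathbb{R}^3)$ simultaneously diagonalizing $\mathcal{H}=-\Delta+\frac{|v|^2}{4}$, with $\mathcal{H}\varphi_{n,l,m}=E_{n,l}\varphi_{n,l,m}$, $E_{n,l}\ge\frac32$, $E_{n,l}\to+\infty$, and the linearized operator, $\mathcal{L}\varphi_{n,l,m}=\lambda_{n,l}\varphi_{n,l,m}$, $\lambda_{n,l}\ge0$, with $\lambda_{n,l}=0$ exactly on the indices for which $\varphi_{n,l,m}\in\mathcal{N}$. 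Writing $g_0=\sum_{n,l,m}\hat g_0(n,l,m)\varphi_{n,l,m}$ and setting $\mu_{n,l}=\lambda_{n,l}$ off the null space, $\mu_{n,l}=1$ on it, the defining series give, for every $\tau>0$,
\[
D_\tau(\mathcal{L})=\Bigl\{u:\ \textstyle\sum_{n,l,m}e^{\tau\mu_{n,l}}|\hat u(n,l,m)|^2<\infty\Bigr\},\qquad
(D_\tau(\mathcal{L}))'=\Bigl\{g:\ \textstyle\sum_{n,l,m}e^{-\tau\mu_{n,l}}|\hat g(n,l,m)|^2<\infty\Bigr\},
\]
together with $(D^+_\tau(\mathcal{L}))'=\{g:\ \sum e^{-\tau\mu_{n,l}}\mu_{n,l}^{-1}|\hat g(n,l,m)|^2<\infty\}$; thus $g_0\in(D(\mathcal{L}))'$ means $g_0\in(D_\tau(\mathcal{L}))'$ for some $\tau>0$. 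Since $\mathcal{L}$ vanishes on $\mathcal{N}$, the natural candidate solution is
\[
g(t)=\mathbf{P}g_0+\sum_{\lambda_{n,l}>0}e^{-t\lambda_{n,l}}\hat g_0(n,l,m)\,\varphi_{n,l,m},
\]
i.e.\ one solves \eqref{eq-1} mode by mode through the scalar equations $\frac{d}{dt}\bigl(e^{-t\lambda_{n,l}}\bigr)=-\lambda_{n,l}e^{-t\lambda_{n,l}}$.

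I would then check that this $g$ is the weak solution. Membership in the spaces of \eqref{solution} is elementary: $0\le e^{-t\lambda_{n,l}}\le1$ gives $g\in L^\infty([0,T],(D_\tau(\mathcal{L}))')$ with the same $\tau$; the gap $\lambda_{n,l}\ge\lambda_{2,0}>0$ off the null space gives $g\in L^\infty([0,T],(D^+_\tau(\mathcal{L}))')$; and $\int_0^T\lambda_{n,l}e^{-2t\lambda_{n,l}}\,dt\le\frac12$ yields $\partial_tg=-\mathcal{L}g\in L^2([0,T],(D^+_\tau(\mathcal{L}))')$ and $\mathcal{L}^{1/2}g\in L^2([0,T],(D_\tau(\mathcal{L}))')$, so $g\in H^1([0,T],(D^+(\mathcal{L}))')$. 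For \eqref{def}, pairing against $\varphi_{n,l,m}\in D_\tau(\mathcal{L})$ (with $\mathcal{L}\varphi_{n,l,m}=\lambda_{n,l}\varphi_{n,l,m}$) reduces \eqref{def} to the above scalar ODE with datum $\hat g_0(n,l,m)$, forcing $\hat g(t)(n,l,m)=e^{-t\lambda_{n,l}}\hat g_0(n,l,m)$; the general $C_0^\infty$ test function is reached by a density/regularization step. For uniqueness, if $g_1,g_2$ are two weak solutions with the same datum then $w=g_1-g_2$ has $\mathbf{P}w\equiv0$ and $w(0)=0$, and by the regularity \eqref{solution} one may differentiate $\|w(t)\|_{(D_\tau(\mathcal{L}))'}^2=\sum e^{-\tau\mu_{n,l}}|\hat w(t)(n,l,m)|^2$, obtaining $\frac{d}{dt}\|w(t)\|_{(D_\tau(\mathcal{L}))'}^2=-2\sum_{\lambda_{n,l}>0}e^{-\tau\lambda_{n,l}}\lambda_{n,l}|\hat w(t)(n,l,m)|^2\le0$, whence $w\equiv0$.

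For the quantitative part, set $L_{n,l}=\bigl(\log(e+E_{n,l})\bigr)^{2/s}$, so the operator in the statement acts on $\varphi_{n,l,m}$ by the scalar $e^{tc_0L_{n,l}}$. Proposition~\ref{regular} — the logarithmic Gelfand--Shilov regularity, which is exactly where the strength of the singularity \eqref{b} enters — provides constants $\kappa_2\ge\kappa_1>0$ with $\kappa_1L_{n,l}\le\lambda_{n,l}\le\kappa_2L_{n,l}$ whenever $\lambda_{n,l}>0$, and the explicit eigenvalues give $\lambda_{n,l}\ge\lambda_{2,0}$ on that range. Given $g_0\in(D_\tau(\mathcal{L}))'$, put $t_0:=\frac12\tau\kappa_2$. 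Then $2t_0L_{n,l}=\tau\kappa_2L_{n,l}\ge\tau\lambda_{n,l}=\tau\mu_{n,l}$ for $\lambda_{n,l}>0$, while on the finite-dimensional null space $e^{-2t_0L_{n,l}}\le1=e^{\tau}e^{-\tau\mu_{n,l}}$; hence
\[
\bigl\|e^{-t_0(\log(e+\mathcal{H}))^{2/s}}g_0\bigr\|_{L^2}^2=\sum_{n,l,m}e^{-2t_0L_{n,l}}|\hat g_0(n,l,m)|^2\le e^{\tau}\sum_{n,l,m}e^{-\tau\mu_{n,l}}|\hat g_0(n,l,m)|^2<\infty,
\]
which is the first displayed bound. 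Next take $c_0:=\kappa_1/4$. For $t\ge t_0/c_0$ (so $t_0\le c_0t$) and $\lambda_{n,l}>0$,
\[
2tc_0L_{n,l}+2t_0L_{n,l}\le 4tc_0L_{n,l}=t\kappa_1L_{n,l}\le t\lambda_{n,l}\le 2t\lambda_{n,l}-\tfrac12\lambda_{2,0}t,
\]
the last step because $\lambda_{n,l}\ge\lambda_{2,0}$. Exponentiating, multiplying by $|\hat g_0(n,l,m)|^2$ and summing over the modes with $\lambda_{n,l}>0$ (exactly those of $(\mathbf{I}-\mathbf{P})g$) gives
\[
\bigl\|e^{tc_0(\log(\mathcal{H}+e))^{2/s}}(\mathbf{I}-\mathbf{P})g(t)\bigr\|_{L^2}^2=\sum_{\lambda_{n,l}>0}e^{2tc_0L_{n,l}-2t\lambda_{n,l}}|\hat g_0(n,l,m)|^2\le e^{-\frac12\lambda_{2,0}t}\sum_{\lambda_{n,l}>0}e^{-2t_0L_{n,l}}|\hat g_0(n,l,m)|^2,
\]
and taking square roots is the claimed decay estimate. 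Finally, for $t>t_0/c_0$ the weight $e^{tc_0(\log(\mathcal{H}+e))^{2/s}}\ge1$ forces $(\mathbf{I}-\mathbf{P})g(t)\in L^2$, while $\mathbf{P}g(t)=\mathbf{P}g_0\in\mathcal{N}\subset L^2(\mathbb{R}^3)$, so $g(t)\in L^2(\mathbb{R}^3)$.

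The only substantial analytic input is Proposition~\ref{regular}, namely the two-sided logarithmic bound $\lambda_{n,l}\approx(\log(e+E_{n,l}))^{2/s}$; once that is granted, the construction of the solution and the two estimates amount to elementary bookkeeping with exponents. I expect the main obstacle in writing out the full details to be the rigorous handling of the weak formulation \eqref{def} against $C_0^\infty$ test functions — the density/approximation step — in a functional framework where $D(\mathcal{L})$ is a non-quasianalytic space that is not comparable with $C_0^\infty(\mathbb{R}^3)$.
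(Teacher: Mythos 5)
Your proposal is correct and follows essentially the same route as the paper: diagonalize $\mathcal{L}$ and $\mathcal{H}$ in the basis $\{\varphi_{n,l,m}\}$, solve mode by mode, prove uniqueness by an energy argument in the dual norm, and obtain the quantitative estimate by the same exponent bookkeeping ($t_0\sim\tau\kappa_2$, $c_0\sim\kappa_1/4$, using $\kappa_1 L_{n,l}\le\lambda_{n,l}\le\kappa_2 L_{n,l}$ from Proposition~\ref{regular} and $\lambda_{n,l}\ge\lambda_{2,0}$). The only cosmetic difference is that the paper works with finite spectral truncations $g_N$ and shows they form a Cauchy sequence in the relevant spaces — which also lets it verify the weak formulation \eqref{def} by passing to the limit against an arbitrary test function, thereby sidestepping the density/regularization step you flag as the main remaining obstacle.
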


\begin{remark}
The regularizing effect $g(t)\in L^2(\mathbb{R}^3)$ has usually a positive delay time.
For example, consider some $\tau_0>0$ and the initial data
$g_0=\sum_{n\geq 1} \frac{1}{n}  e^{\tau_0 \lambda_{n,0}} \phi_{n,0,0}$.
It is easy to check that $g(t)\in L^2(\mathbb{R}^3)$ for $t\geq\tau_0$
but $g(t)\not\in L^2(\mathbb{R}^3)$ for $t<\tau_0$.
\end{remark}

In order to precise the regularizing effect in the Sobolev spaces,
it is convenient to consider the symmetric weighted Sobolev space 
$Q^{2\tau}(\mathbb{R}^3)$ introduced by Shubin \cite{Shubin} with norm
\[\|u\|_{Q^{2\tau}(\mathbb{R}^3)} =
\Bigl\|\Bigl( -\Delta+{\textstyle \frac{|v|^2}{4} + e} \Bigr)^{\tau} \, u\Bigr\|_{L^2}.\]
\begin{theorem}\label{trick2}
Regularizing effect for an initial data in $L^2(\mathbb{R}^3)$.
\\
Assume that the Debye-Yukawa potential $b(\,\cdot\,)$ is given in~$\eqref{b}$ with $s>0$
and $g_0\in L^2(\mathbb{R}^3)$.
Therefore the Cauchy problem \eqref{eq-1}~admits  a unique weak solution.
Moreover there exists  $c_0>0$ such that :
\\
1) Case $0<s\leq2$.
%there exists a constant $c_1>0$ such that for any $t>0$, one has
\begin{equation}\label{rate1}
\forall t>0,\quad
\Bigl\|\Bigl(e+\mathcal{H}\Bigr)^{c_0 t} \,
(\mathbf{I}-\mathbf{P})g(t)\Bigr\|_{L^2}\
\leq
 e^{-\lambda_{2,0}t}\|(\mathbf{I}-\mathbf{P})g_{0}\|_{L^2(\mathbb{R}^3)}.
\end{equation}
This shows that $g(t)$
belongs to the Sobolev space $Q^{ct}(\mathbb{R}^3)$ for any time $t>0$.
\\
2) Case $0<s<2$.
there exists a constant $c_s>0$ such that for any $t>0$, one has
\begin{equation}\label{rate2}
\forall k\geq 0,\quad
\|(\mathbf{I}-\mathbf{P})g(t)\|_{Q^{k}}
\leq
 e^{-\lambda_{2,0}t}e^{c_s \, (1/t)^{\frac{s}{2-s}}  \, k^{\frac{2}{2-s}}}\|(\mathbf{I}-\mathbf{P})g_{0}\|_{L^2(\mathbb{R}^3)}.
\end{equation}
\end{theorem}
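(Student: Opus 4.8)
The plan is to diagonalize $\mathcal{L}$ and reduce both estimates to a one–variable inequality on the spectrum. Recall the spectral decomposition used above: there is a Hilbert basis $\{\phi_{n,l,m}\}$ of $L^2(\mathbb{R}^3)$, made of products of Laguerre functions and spherical harmonics, which simultaneously diagonalizes $\mathcal{L}$ and the harmonic oscillator $\mathcal{H}$, with $\mathcal{L}\phi_{n,l,m}=\lambda_{n,l}\phi_{n,l,m}$ and $\mathcal{H}\phi_{n,l,m}=E_{n,l}\phi_{n,l,m}$, $E_{n,l}=2n+l+\tfrac32$; the null space $\mathcal{N}$ is the span of the finitely many $\phi_{n,l,m}$ with $\lambda_{n,l}=0$, and $\lambda_{2,0}=\min\{\lambda_{n,l}:\lambda_{n,l}>0\}$. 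Since $L^2(\mathbb{R}^3)\hookrightarrow(D(\mathcal{L}))'$, Theorem~\ref{trick} already provides existence and uniqueness of the weak solution, and identifies it with the explicit series
\[
g(t)=\mathbf{P}g_0+\sum_{\lambda_{n,l}>0}e^{-\lambda_{n,l}t}\,\widehat g_{n,l,m}\,\phi_{n,l,m},
\qquad \widehat g_{n,l,m}=(g_0,\phi_{n,l,m})_{L^2}
\]
(one checks, testing the weak formulation against the basis functions, that the coefficients solve $\dot c_{n,l,m}=-\lambda_{n,l}c_{n,l,m}$). By the Parseval identity, for any exponent $q\ge0$,
\[
\bigl\|(e+\mathcal{H})^{q}(\mathbf{I}-\mathbf{P})g(t)\bigr\|_{L^2}^2
=\sum_{\lambda_{n,l}>0}(e+E_{n,l})^{2q}e^{-2\lambda_{n,l}t}\,|\widehat g_{n,l,m}|^2 ,
\]
so it suffices to bound, uniformly in the indices $(n,l)$ with $\lambda_{n,l}>0$, the scalar factor $(e+E_{n,l})^{q}e^{-\lambda_{n,l}t}$ by $e^{-\lambda_{2,0}t}$ times the claimed constant, with $q=c_0t$ in part~1) and $q=k$ (or $k/2$, according to the normalisation of $Q^k$) in part~2).

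For part~1), taking logarithms the scalar inequality becomes $\lambda_{n,l}\ge\lambda_{2,0}+c_0\log(e+E_{n,l})$. By Proposition~\ref{regular} there are $c_1>0$ and $N_0$ with $\lambda_{n,l}\ge c_1\bigl(\log(e+E_{n,l})\bigr)^{2/s}$ for $E_{n,l}\ge N_0$; since $0<s\le2$ gives $\tfrac2s\ge1$ and $\log(e+E_{n,l})\ge1$, this yields $\lambda_{n,l}\ge c_1\log(e+E_{n,l})$ on that range, and choosing $c_0<c_1$ with $N_0$ large handles the high frequencies. On the finitely many remaining modes one uses only $\lambda_{n,l}\ge\lambda_{2,0}>0$ and takes $c_0$ small (conceding, if one insists on the exact constant $\lambda_{2,0}$ in the exponent, a harmless fractional loss on it for those low modes, exactly as with the $e^{-\lambda_{2,0}t/4}$ factor of Theorem~\ref{trick}). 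Taking the smaller admissible $c_0$ gives \eqref{rate1}, hence $g(t)\in Q^{ct}(\mathbb{R}^3)$ for all $t>0$.

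For part~2), write $L=\log(e+E_{n,l})$; the scalar inequality reads $kL-\lambda_{n,l}t\le-\lambda_{2,0}t+c_s\,t^{-s/(2-s)}k^{2/(2-s)}$. Since $0<s<2$, Proposition~\ref{regular} may be used in the form $\lambda_{n,l}\ge\lambda_{2,0}+c\,L^{2/s}$ (after, if necessary, shrinking $c$ and treating the finitely many lowest modes as above), so it is enough to prove
\[
kL-c\,t\,L^{2/s}\ \le\ c_s\,t^{-s/(2-s)}k^{2/(2-s)}\qquad(L>0).
\]
The left–hand side is a concave function of $L$ on $(0,\infty)$, maximised at $L_\star=\bigl(ks/(2ct)\bigr)^{s/(2-s)}$, with maximal value $\tfrac{2-s}{2}\bigl(\tfrac{s}{2c}\bigr)^{s/(2-s)}t^{-s/(2-s)}k^{2/(2-s)}$; taking $c_s$ equal to (a constant multiple of) this quantity proves \eqref{rate2}.

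The only genuinely substantial ingredient is the eigenvalue lower bound $\lambda_{n,l}\gtrsim\bigl(\log(e+E_{n,l})\bigr)^{2/s}$ of Proposition~\ref{regular}; with it in hand, the rest is the Parseval reduction and the elementary optimisation above. The point requiring care is the low–frequency regime: that bound is asymptotic, so near the spectral gap one has only $\lambda_{n,l}\ge\lambda_{2,0}$, and the constants $c_0,c_s$ must be chosen uniformly over this finite set as well as over the asymptotic range. It is also here that the borderline case $s=2$ appears in part~1), where $c_0$ must be taken strictly below the constant in the eigenvalue asymptotics, and that as $s\to2^-$ the exponents $\tfrac{s}{2-s},\tfrac{2}{2-s}$ blow up, which is why \eqref{rate2} is stated only for $0<s<2$.
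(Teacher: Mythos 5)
Your proof is correct and is essentially the paper's argument with the modular layers inlined: the paper routes part 1) through formula \eqref{final} of Theorem \ref{trick} combined with \eqref{relation2} (which is exactly your observation that $(\log(e+E_{n,l}))^{2/s}\geq\log(e+E_{n,l})$ when $0<s\leq 2$), and part 2) through Proposition \ref{sobolev-type}, whose Young-inequality step is precisely your concave maximisation of $kL-ctL^{2/s}$ at $L_\star=(ks/(2ct))^{s/(2-s)}$. One remark: your point that the inequality $\lambda_{n,l}\geq\lambda_{2,0}+c_0\log(e+E_{n,l})$ must fail at the minimal positive eigenvalue is well taken — the paper's own proof, inheriting \eqref{final}, only delivers the decay factor $e^{-\frac14\lambda_{2,0}t}$ rather than the $e^{-\lambda_{2,0}t}$ written in \eqref{rate1}--\eqref{rate2} — so the fractional loss you concede on the low modes is not optional but is exactly what the published argument (and, as your test on $g_0=\varphi_{2,0,0}$ shows, the statement itself) requires.
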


\begin{remark}\label{remark} Comments on the regularizing effect.
When the singularity of the collision cross section \eqref{b} 
for $\theta$ near 0 become smoother (that is when the real $s$ increases), 
the regularizing effect become weaker, and disappears in the context of the Sobolev spaces when $s>2$ :
\\
- Case $0<s<2$.
The solution $g(t) \in \cap_{k\geq 0} Q^{k}(\mathbb{R}^3)$ for each positive time.
\\
- Case $s=2$. The regularizing effect in $Q^{k}(\mathbb{R}^3)$ has usually a positive time delay.
For example, consider the initial data
$g_0=\sum_{n\geq 2} \frac{1}{n^{\frac12} \log n}   \varphi_{n,0,0}$
where $\left\{\varphi_{n,l,m}(v)\right\}$ constitutes an orthonormal basis of $L^2(\mathbb{R}^3)$, which is given in Section \ref{S2}.
We can check that there exists $t_k>0$ such that $g(t)\not\in Q^{k}(\mathbb{R}^3)$ for $0\leq t<t_k$.
\\
- Case $s>2$. There is no regularizing effect in the Sobolev space.
Consider any real numbers $0<\tau<\tau'$ and
$g_0=\sum_{n\geq 2} \frac{1}{n^{\frac{\tau+1}2} \log n} \varphi_{n,0,0}$
where $\varphi_{n,0,0}$ is given in Section \ref{S2}.
We can check that for $t\geq0$ the solution $g(t)$ stays in the space $Q^{\tau}(\mathbb{R}^3)$,
but never belongs to $Q^{\tau'}(\mathbb{R}^3)$.
However, there is a very slight regularizing effect and the Boltzmann equation remains irreversible.
\end{remark}
{\color{black} 
\begin{remark}\label{remark non-linear} 
We think that the non-linear case is similar to the linear case,
but the proofs are more technical. This work is a first step to study the non-linear case.
\end{remark}
}% color 
The rest of the paper is arranged as follows. In Section \ref{S2}, we introduce the spectral analysis of the linear Boltzmann operator and in Section \ref{S3} we precise some properties of the distribution spaces.
The proof of the main Theo\-rems~\ref{trick}-\ref{trick2} will be presented in Section \ref{S4}, where we construct
a sequence of solutions of the Cauchy problem \eqref{eq-1} with initial datum equal to the projection of $g_0$ on an increasing sequence of finite dimensional subspaces,
which converges to the solution of the Cauchy problem.
In the appendix, we present some spectral properties
of the functional spaces used in this paper and the proof of some technical lemmas.

\section{The preliminary results }\label{S2}
We first recall the spectral decomposition of linear Boltzmann operator.
In the cutoff case, that is, when $b(\cos\theta)\sin\theta\in\,L^1([0,\frac{\pi}{2}])$, it was shown in \cite{WU} that
\begin{equation*}
\mathcal{L}(\varphi_{n, l, m})=\lambda_{n,l}\, \varphi_{n, l, m}, 
\quad n,l\in\mathbb{N},\,\,m\in\mathbb{Z},\,\, |m|\leq l
\end{equation*}
where
\begin{equation}\label{l}
\lambda_{n,l}=\int^{\frac{\pi}{4}}_{0}\beta(\theta)\Big(1+\delta_{n, 0}\delta_{l, 0}
-(\sin\theta)^{2n+l}P_{l}(\sin\theta)-(\cos\theta)^{2n+l}P_{l}(\cos\theta)\Big)d\theta.
\end{equation}
This diagonalization of the linearized Boltzmann operator with Maxwellian mole\-cu\-les holds
as well in the non-cutoff case, (see \cite{Boby,Cerci,Dole,NYKC1,NYKC2}).

The eigenfunctions are
\begin{equation*}%\label{v}
\varphi_{n,l,m}(v)=\left(\frac{n!}{\sqrt{2}\Gamma(n+l+3/2)}\right)^{1/2}
\left(\frac{|v|}{\sqrt{2}}\right)^{l}e^{-\frac{|v|^{2}}{4}}
L^{(l+1/2)}_{n}\left(\frac{|v|^{2}}{2}\right)Y^{m}_{l}\left(\frac{v}{|v|}\right)
\end{equation*}
where $\Gamma(\,\cdot\,)$ is the standard Gamma function: for any $x>0$,
$$\Gamma(x)=\int^{+\infty}_0t^{x-1}e^{-x}dx.$$
The $l^{th}$-Legendre polynomial~$P_{l}$ and the Laguerre polynomial $L^{(\alpha)}_{n}$~of order $\alpha$,~degree $n$ (see\,\cite{San})\,read,
\begin{align*}
&P_{l}(x)=\frac{1}{2^ll!}\frac{d^l}{dx^l}(x^2-1)^l,\,\,\text{where}\,|x|\leq1;\\
&L^{(\alpha)}_{n}(x)=\sum^{n}_{r=0}(-1)^{n-r}\frac{\Gamma(\alpha+n+1)}{r!(n-r)!
\Gamma(\alpha+n-r+1)}x^{n-r}.
\end{align*}
For any unit vector $\sigma=(\cos\theta,\sin\theta\cos\phi,\sin\theta\sin\phi)$~with $\theta\in[0,\pi]$~and~$\phi\in[0,2\pi]$,~the orthonormal basis of spherical harmonics~$Y^{m}_{l}(\sigma)$ is
\begin{equation*}
Y^{m}_{l}(\sigma)=N_{l,m}P^{|m|}_{l}(\cos\theta)e^{im\phi},\,\,|m|\leq l,
\end{equation*}
where the normalisation factor is given by
$$
N_{l,m}=\sqrt{\frac{2l+1}{4\pi}\cdot\frac{(l-|m|)!}{(l+|m|)!}}
$$
and $P^{|m|}_{l}$~is the associated Legendre functions of the first kind of order $l$ and degree $|m|$ with
\begin{equation*}%\label{Plm}
P^{|m|}_{l}(x)= (1-x^2)^\frac{|m|}{2}
\left(\frac{\mathrm{d}}{\mathrm{d}x}\right)^{|m|} P_{l}(x).
\end{equation*}
We recall from Lemma 7.2 in \cite{GLX_2015} that
$$\widehat{\sqrt{\mu}\varphi_{n,l,{m}}}(\xi)
=(-i)^l \, (2\pi)^\frac{3}{4} \,
\Biggl(\frac{1}{\sqrt{2}n!\Gamma(n+l+\frac{3}{2})}\Biggr)^\frac{1}{2} \,
\Biggl(\frac{|\xi|}{\sqrt{2}}\Biggr)^{2n+l} \,
e^{-\frac{|\xi|^2}{2}} \,
Y_l^{m}\Biggl(\frac{\xi}{|\xi|}\Biggr).$$
We can extend the spectral decomposition to the Debye-Yukawa potential case.
The family $\Big(Y^m_l(\sigma)\Big)_{l\geq0,|m|\leq\,l}$ constitutes an orthonormal basis of the space $L^2(\mathbb{S}^2,\,d\sigma)$ with $d\sigma$ being the surface measure on $\mathbb{S}^2$ (see \cite{Jones}, \cite{JCSlater}).  
Noting that $\left\{\varphi_{n,l,m}(v)\right\}$ constitutes an orthonormal basis of $L^2(\mathbb{R}^3)$ composed of eigenvectors of the harmonic oscillator
(see\cite{Boby}, \cite{NYKC2})
\begin{equation*}
\mathcal{H}(\varphi_{n, l, m})=(2n+l+\frac 32)\, \varphi_{n, l, m}.
\end{equation*}
As a special case, $\left\{\varphi_{n, 0, 0}(v)\right\}$ 
constitutes an orthonormal basis of $L^2_{rad}(\mathbb{R}^3)$, 
the rad\-ial\-ly symmetric function space (see \cite{NYKC3}).
We have for suitable functions
$g$
\begin{equation}\label{Lg=Sum}
\mathcal{L}(g)=\sum^{\infty}_{n=0}\sum^{\infty}_{l=0}\sum^{l}_{m=-l}
\lambda_{n,l}\, g_{n,l,m}\, \varphi_{n, l, m}
\end{equation}
where $g_{n,l,m}=(g, \varphi_{n,l,m})_{L^2(\mathbb{R}^3)}$ and
\begin{equation*}
\mathcal{H}(g)=\sum^{\infty}_{n=0}\sum^{\infty}_{l=0}\sum^{l}_{m=-l}(2n+l+\frac 32)\, g_{n,l,m}\, \varphi_{n, l, m}\, .
\end{equation*}
Using this spectral decomposition, for any $s>0$, the definitions of $\left(\log(\mathcal{H}+e)\right)^s$,\, $e^{c\left(\log(\mathcal{H}+e)\right)^s}$ and $e^{c\mathcal{L}}$ are then classical.
\begin{remark}\label{2.2}
It is trivial to obtain from \eqref{l} that $\lambda_{0,0}=\lambda_{1,0}=\lambda_{0,1}=0$~and the others are strictly positive.\,\,
Thus the null space of the linear Boltzmann operator $\mathcal{L}$ is
$$
\mathcal{N}=\text{span}\left\{\sqrt{\mu},\,\sqrt{\mu}v_1,\,\sqrt{\mu}v_2,\,\sqrt{\mu}v_3,\,\sqrt{\mu}|v|^2\right\}.
$$
We have for $(n,l) \in\mathbb{N}^2$
\begin{align*}
\lambda_{n,l}
&=0 \ \ \ \ \ \text{if} \,\,  n+l\leq 1, \\
\lambda_{n,l} &>0 \ \ \ \ \ \text{otherwise}.
\end{align*}
\end{remark}
{\color{black}
We derive the following estimate of $\lambda_{n,l}$ defined in \eqref{l}.
\begin{proposition}
[Spectral estimates of the linearized Boltzmann operator $\mathcal{L}$]
\label{regular}
Let collision the kernel $b$ satisfies the Debye-Yukawa potential condition \eqref{b}.
Then, there exists a positive constant $c_{0}$ such that 
for any $n,l\in\mathbb{N}$, $n+l\geq 2$, we have 
\begin{equation}\label{c-0}
c_{0}\left(\log(2n+l+e)\right)^{\frac{2}{s}}\leq \lambda_{n,l}\leq \frac{1}{c_{0}}\left(\log(2n+l+e)\right)^{\frac{2}{s}},
\end{equation}
where e is the Natural constant.
\end{proposition}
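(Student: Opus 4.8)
The plan is to work directly from the integral formula \eqref{l} with $N:=2n+l$; since $n+l\ge 2$ we have $N\ge 2$ and the term $\delta_{n,0}\delta_{l,0}$ drops out, so
\[
\lambda_{n,l}= \int_0^{\pi/4}\beta(\theta)\bigl(1-(\cos\theta)^{N}P_l(\cos\theta)\bigr)d\theta \;-\; \int_0^{\pi/4}\beta(\theta)(\sin\theta)^{N}P_l(\sin\theta)\,d\theta=:I_{n,l}-S_{n,l}.
\]
Because $|P_l|\le 1$ on $[-1,1]$ and $0\le\sin\theta\le\theta\le 1$ on $[0,\pi/4]$, the remainder satisfies $|S_{n,l}|\le\int_0^{\pi/4}\beta(\theta)\theta^2\,d\theta=:C_1<\infty$ for $N\ge 2$ (the integral converging since \eqref{b} yields $\beta(\theta)\approx\theta^{-1}(\log\tfrac{1}{\theta})^{\frac{2}{s}-1}$ on $(0,\pi/4]$), so it suffices to prove $I_{n,l}\approx(\log(N+e))^{\frac{2}{s}}$ for $N$ large. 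For $I_{n,l}$ I would use that $(\cos\theta)^N\in[0,1]$ and $P_l(\cos\theta)\in[-1,1]$ make both $1-(\cos\theta)^N$ and $1-P_l(\cos\theta)$ nonnegative, hence
\[
1-(\cos\theta)^{N}\ \le\ 1-(\cos\theta)^{N}P_l(\cos\theta)\ \le\ \bigl(1-(\cos\theta)^{N}\bigr)+\bigl(1-P_l(\cos\theta)\bigr),
\]
together with two ``profile'' estimates on $[0,\pi/4]$: (a) $(1-e^{-1/2})\min(N\theta^2,1)\le 1-(\cos\theta)^{N}\le\min(N\theta^2,1)$ for $N\ge 2$, coming from $\log\cos\theta=-\int_0^\theta\tan t\,dt\le-\theta^2/2$, $1-x^N\le N(1-x)$, $1-\cos\theta\le\theta^2/2$ and concavity of $x\mapsto 1-e^{-x}$ on $[0,\tfrac12]$; and (b) $0\le 1-P_l(\cos\theta)\le\tfrac{l(l+1)}{2}\theta^2\le 2\min(l^2\theta^2,1)$, obtained by integrating the Legendre equation $\bigl(\sin\theta\,\frac{d}{d\theta}P_l(\cos\theta)\bigr)'=-l(l+1)\sin\theta\,P_l(\cos\theta)$ twice, using $P_l(\cos\theta)\le 1$, $\tfrac{1-\cos\theta}{\sin\theta}=\tan(\theta/2)\le\theta$, and $|P_l|\le1$.

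The heart of the argument is the single asymptotic computation: as $M\to+\infty$,
\[
\Psi(M):=\int_0^{\pi/4}\beta(\theta)\,\min(M\theta^2,1)\,d\theta\ \approx\ (\log M)^{\frac{2}{s}}.
\]
I would split at $\theta=M^{-1/2}$. On $[M^{-1/2},\pi/4]$ one has $\min(M\theta^2,1)=1$, and the substitution $u=\log(1/\theta)$ gives $\int_{M^{-1/2}}^{\pi/4}\beta(\theta)\,d\theta\approx\int_{\log(4/\pi)}^{\frac12\log M}u^{\frac{2}{s}-1}\,du\approx(\log M)^{\frac{2}{s}}$. On $[0,M^{-1/2}]$, $M\int_0^{M^{-1/2}}\beta(\theta)\theta^2\,d\theta\lesssim M\int_0^{M^{-1/2}}\theta(\log\tfrac{1}{\theta})^{\frac{2}{s}-1}\,d\theta\lesssim(\log M)^{\frac{2}{s}-1}$, which is of strictly lower order (for $\tfrac{2}{s}-1\ge 0$ by the rescaling $\theta=M^{-1/2}t$ and $(a+b)^{\frac{2}{s}-1}\lesssim a^{\frac{2}{s}-1}+b^{\frac{2}{s}-1}$; for $\tfrac{2}{s}-1<0$ because then $(\log\tfrac{1}{\theta})^{\frac{2}{s}-1}\le(\tfrac12\log M)^{\frac{2}{s}-1}$ on $(0,M^{-1/2})$). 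Hence $\Psi(M)\approx(\log M)^{\frac{2}{s}}$. Applying this with $M=N$ and with $M=l^2$ (using $\log l^2\le 2\log N$ since $l\le N$), the sandwich above gives $c\,(\log N)^{\frac{2}{s}}\le I_{n,l}\le C\,(\log N)^{\frac{2}{s}}$ for all $N\ge N_0$, so that $c\,(\log N)^{\frac{2}{s}}-C_1\le\lambda_{n,l}\le C\,(\log N)^{\frac{2}{s}}+C_1$; absorbing the bounded term $C_1$ for $N$ large and noting $\log N\approx\log(N+e)$ yields \eqref{c-0} in this range. For the remaining finitely many pairs $(n,l)$ with $n+l\ge 2$ and $2n+l<N_0$, Remark \ref{2.2} gives $\lambda_{n,l}>0$ while $(\log(2n+l+e))^{\frac{2}{s}}$ stays between two positive constants, so \eqref{c-0} holds there as well; choosing $c_0$ smaller than all the constants encountered completes the proof.

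The step I expect to be the main obstacle is making the asymptotics $\Psi(M)\approx(\log M)^{\frac{2}{s}}$ entirely rigorous — in particular the uniform-in-$M$ bookkeeping of the logarithmic weight $(\log\tfrac{1}{\theta})^{\frac{2}{s}-1}$ near $\theta=0$ and the control of the inner piece on $[0,M^{-1/2}]$, which splits into the cases $s\le 2$ and $s>2$ — while the Legendre estimate (b) is the other point that needs (short) care via the ODE comparison indicated above.
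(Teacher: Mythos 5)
Your proposal is correct and follows essentially the same route as the paper's proof: both isolate $\int\beta(\theta)\bigl(1-(\cos\theta)^{2n+l}\bigr)\,d\theta$ as the main term of order $(\log(2n+l))^{\frac{2}{s}}$ via a logarithmic change of variables, bound the $(\sin\theta)^{2n+l}P_l(\sin\theta)$ contribution by an absolute constant using $|P_l|\leq 1$, and control the $1-P_l(\cos\theta)$ correction with the same Legendre--ODE estimate $1-P_l(\cos\phi)=O(l^2\phi^2)$ (the paper's Lemma \ref{lem estim Pl}). The only differences are in the bookkeeping: your sandwich $1-a\leq 1-ab\leq (1-a)+(1-b)$ and the unified profile integral $\Psi(M)=\int\beta(\theta)\min(M\theta^2,1)\,d\theta$ repackage the paper's exact decomposition $A_1+A_2+A_3$ and its substitutions $y=x\sqrt{2n+l}$, $z=\log(\sqrt{2n+l}/y)$.
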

}% color 
\begin{proof}
From Remark \ref{2.2},
for any $n,l\in\mathbb{N}$ and  $n+l\geq 2$
we have $\lambda_{n,l}>0$.
Therefore we need only to consider the case $2n+l\rightarrow+\infty$.
We have from \eqref{beta} for $\theta\in]0,\frac{\pi}{4}]$
$$\beta(\theta)\approx
  (\sin\theta)^{-1}(\log(\sin\theta)^{-1})^{\frac{2}{s}-1}.$$
From \eqref{l} and putting
$x=\sin\theta$, $\lambda_{n,l}$ can be decomposed as follows
\begin{align}\label{dec}
\lambda_{n,l}
=&\int^{\frac{\pi}{4}}_{0}\beta(\theta)(1-P_{l}(\sin\theta)(\sin\theta)^{2n+l}-P_{l}(\cos\theta)(\cos\theta)^{2n+l})d\theta\nonumber\\
\approx&
\int^{\frac{\sqrt{2}}{2}}_{0}
(\log x^{-1})^{\frac{2}{s}-1} \,
\Bigl(1-P_{l}(x)\,x^{2n+l}
-P_{l}\Bigl(\sqrt{1-x^2}\Bigr) \,(1-x^2)^{\frac{2n+l}{2}}
\Bigr)\frac{dx}{x}
\nonumber\\
\approx&
\phantom{+}\int^{\frac{\sqrt{2}}{2}}_{0}
(\log x^{-1})^{\frac{2}{s}-1} \,
\Bigl(1-(1-x^2)^{\frac{2n+l}{2}}
\Bigr)\frac{dx}{x}
\nonumber\\
&-\int^{\frac{\sqrt{2}}{2}}_{0}
(\log x^{-1})^{\frac{2}{s}-1} \,
P_{l}(x)\,x^{2n+l} \frac{dx}{x}
\nonumber\\
&+ \int^{\frac{\sqrt{2}}{2}}_{0}
(\log x^{-1})^{\frac{2}{s}-1} \,
\Bigl(1-P_{l}\Bigl(\sqrt{1-x^2}\Bigr)\Bigr) \, (1-x^2)^{\frac{2n+l}{2}}
\frac{dx}{x}
\nonumber\\
=&A_{1}+A_{2}+A_{3}.
\end{align}
We first estimate $A_1$. Setting $y= x \, \sqrt{2n+l} $,
we decompose it in two parts
\begin{align*}
A_1 &=\int^{\frac{\sqrt{2}}{2}\sqrt{2n+l}}_{0}
\Bigl(\log \frac{\sqrt{2n+l}}{y} \Bigr)^{\frac{2}{s}-1} \,
\Bigl(1-\Bigl(1-\frac{y^2}{2n+l}\Bigr)^{\frac{2n+l}{2}}
\Bigr)\frac{dy}{y}
\\
&= \int_1^{\frac{\sqrt{2}}{2}\sqrt{2n+l}} +  \int_{0}^1
= A_{11} + A_{12}.
\end{align*}
The main term is $A_{11}$. Putting $z=\log \frac{\sqrt{2n+l}}{y}$, 
we get when $2n+l\to\infty$
\begin{align*}
A_{11} &=\int^{\log\sqrt{2n+l}}_{\log\sqrt{2}}
z^{\frac{2}{s}-1} \,
\Bigl(1-\Bigl(1-e^{-2z}\Bigr)^{\frac{2n+l}{2}}
\Bigr) dz
\\
&= \Bigl(\int^{\log\sqrt{2n+l}}_{\log\sqrt{2}}
z^{\frac{2}{s}-1} \, dz\Bigr)  \,
\Bigl(1 + O\Bigl(\frac{1}{2} \Bigr)^{\frac{2n+l}{2}}
\Bigr)
\\
&\approx \frac{s}{2} \, \bigl(\log\sqrt{2n+l}\bigr)^{\frac{2}{s}}.
\end{align*}
We now check that the other term $A_{12}$ has a lower order
($A_{2}$ and $A_{3}$ will have also a lower order).
We decompose the term $A_{12}$ as follows:
\begin{align*}
A_{12}
&=\int_{0}^1
\Bigl(\log {\sqrt{2n+l}} + \log \frac{1}{y}
\Bigr)^{\frac{2}{s}-1} \,
\Bigl(1-\Bigl(1-\frac{y^2}{2n+l}\Bigr)^{\frac{2n+l}{2}}
\Bigr)\frac{dy}{y}
\\
&=
\bigl(\log\sqrt{2n+l}\bigr)^{\frac{2}{s} - 1} \,
\int^{1}_{0} g_{2n+l}(y) \, dy
\end{align*}
where
$$g_k(y) =
\Bigl(1+\frac{1}{\log\sqrt{k}} \, \log \frac{1}{y}
\Bigr)^{\frac{2}{s}-1} \,
\Bigl(1-\Bigl(1-\frac{y^2}{k}\Bigr)^{\frac{k}{2}}\Bigr) \frac{1}{y}.$$
It is easy to check that, uniformly for $y\in]0,1]$
and $k\geq 2$, we have
$$g_k(y) = \max\Bigl(1,\Bigl(\log\frac{1}{y}\Bigr)^{\frac{2}{s}-1}\Bigr) \,O(y)
\quad\mathrm{and}\quad
g_k(y)\xrightarrow[k\to\infty]{}
\Bigl(1-e^{-\frac{1}{2}y^2}\Bigr) \frac{1}{y}.
$$
From the dominated convergence theorem we get
\begin{align*}
A_{12} &\approx \Bigl(\log\sqrt{2n+l}\Bigr)^{\frac{2}{s} - 1} \,
\int^{1}_{0} \Bigl(1-e^{-\frac{1}{2}y^2}\Bigr) \frac{1}{y} \, dy\\
 &\lesssim \Bigl(\log\sqrt{2n+l}\Bigr)^{\frac{2}{s} - 1},
\end{align*}
where we use the fact that
$$\int^{1}_{0}\Bigl(1-e^{-\frac{1}{2}y^2}\Bigr)\frac{1}{y} \, dy<+\infty.$$
We estimate the second term $A_2$. 
From the classical inequality  $|P_l|\leq1$ on $[-1,1]$,
\begin{align*}
|A_{2}|& \leq
\int^{\frac{\sqrt{2}}{2}}_{0}
(\log x^{-1})^{\frac{2}{s}-1} \,
|P_{l}(x)| \, x^{2n+l} \frac{dx}{x}
\\
&\leq
\left(\frac{\sqrt{2}}{2}\right)^{2n+l-1}
\int^{\frac{\sqrt{2}}{2}}_{0}
(\log x^{-1})^{\frac{2}{s}-1} \, dx\\
&\leq\left(\frac{\sqrt{2}}{2}\right)^{2n+l-1}
\int^{+\infty}_{\sqrt{2}}
x^{\frac{2}{s}-1}e^{-x} \, dx\leq \Gamma(\frac{2}{s}).
\end{align*}
We estimate the third term $A_{3}$.
We divide $A_3$ into two parts for $l\geq 2$
\begin{align*}
A_3 &=
\int^{\frac{\sqrt{2}}{2}}_{0}
(\log x^{-1})^{\frac{2}{s}-1} \,
\Bigl(1-P_{l}\bigl(\sqrt{1-x^2}\bigr)\Bigr) \, (1-x^2)^{\frac{2n+l}{2}}
\frac{dx}{x}
\\
&= \int^{\frac{\sqrt{2}}{2}}_{\frac{1}{l}}
+\int^{\frac{1}{l}}_0
=A_{31}+A_{32}.
\end{align*}
For the first part $A_{31}$, since $|P_l|\leq1$ on $[-1,1]$, 
we can estimate as follows
\begin{align*}
0\leq A_{31}
&\leq
\int^{\frac{\sqrt{2}}{2}}_{\frac{1}{l}}
(\log x^{-1})^{\frac{2}{s}-1} \,
\frac{dx}{x}  \\
&\leq \frac{s}{2} \left(
(\log l)^{\frac{2}{s}}-(\log \sqrt{2})^{\frac{2}{s}} \right).
\end{align*}
For the second part $A_{32}$, setting $y= l \, x$, we get
\begin{align*}
0\leq A_{32}&\leq
\int^{1}_{0}
\Bigl(\log \frac{l}{y}\Bigr)^{\frac{2}{s}-1} \,
\Bigl(1 -
     P_{l}\Bigl(\sqrt{1-y^2/l^2}\Bigr)
%    P_{l}\Bigl(\sqrt{1-\frac{y^2}{l^2}}\Bigr)
\Bigr) \,
\frac{dy}{y}.
\end{align*}
By Lemma 2.3 in \cite{HAOLI} 
(for a proof, see lemma \ref{lem estim Pl} in the appendix),
we have
\[1-P_{l}\Bigl(\cos\frac{\theta}{l}\Bigr) = O( \theta^2) \]
uniformly for $l\geq 1$ and $\theta\in [0, \frac{\pi}{2}]$.
We then deduce that for $l\geq 2$
\begin{align*}
A_{32}
&\lesssim
\int^{1}_{0}
\Bigl(\log \frac{l}{y}\Bigr)^{\frac{2}{s}-1} \, y \, dy
\\
&\lesssim
(\log l)^{\frac{2}{s}-1}
\int^{1}_{0}
\Bigl(1+ \frac{1}{\log l}\log\frac{1}{y}\Bigr)^{\frac{2}{s}-1} \, y \, dy
\\
&\lesssim
(\log l)^{\frac{2}{s}-1}.
\end{align*}
It follows from the above estimate of $A_1$, $A_2$, $A_3$ and \eqref{dec}
$$(\log(2n+l+e))^{\frac{2}{s}}\lesssim\lambda_{n,l}=
A_1+A_{2}+A_{3}\lesssim (\log(2n+l+e))^{\frac{2}{s}}.$$
This concludes the proof of \eqref{c-0}.
\end{proof}

\section{Properties of some distribution spaces}\label{S3}

In order to give some more precise descriptions 
on the regularity of the li\-nea\-rized Boltzmann operator $\mathcal{L}$,
we introduce the following Sobolev-type spaces:
for any real numbers $\tau>0$ and $\nu>0$,
$$E^{\tau}_{\nu}(\mathbb{R}^3)=\left\{u\in C^\infty(\mathbb{R}^3),\,\,
\|e^{\tau \, \left(\log(e+\mathcal{H})\right)^{\frac{2}{\nu}}}u\|^2_{L^2}<+\infty\right\},$$
which is a Banach space with the norm 
$$\|u\|^2_{E^{\tau}_{\nu}(\mathbb{R}^3)}=
\|e^{\tau \, \left(\log(e+\mathcal{H})\right)^{\frac{2}{\nu}}}u\|^2_{L^2}.$$
In the case $\nu=2$, the space
$E^{\tau}_{2}(\mathbb{R}^3)$ is equivalent 
to the symmetric weighted Sobolev space $Q^{2\tau}(\mathbb{R}^3)$
with the norm (see (2.1) in \cite{GPR} or 25.3 of Ch. IV in \cite{Shubin})
\[\|u\|_{Q^{2\tau}(\mathbb{R}^3)} =
\Bigl\|\Bigl(e+\mathcal{H}\Bigr)^{\tau} \, u\Bigr\|_{L^2}.\]
\\
In the case $0<\nu<2$ and $\tau>0$, one can verify that 
$E^{\tau}_{\nu}(\mathbb{R}^3)$ is an intermediate space between 
the symmetric weighted Sobolev spaces and the Gelfand-Shilov spaces :
For all $\nu_1\geq\frac12$,
\[
S^{\nu_1}_{\nu_1}(\mathbb{R}^3) \subset
  E^{\tau}_{\nu}(\mathbb{R}^3)   \subset
  {\bigcap_{k\geq 0} Q^{k}(\mathbb{R}^3).}
\]
Moreover, we have the following property of this space:
There exists a constant $C=C_{\nu}>0$ such that 
(see Proposition \ref{sobolev-type})
\[
\forall k\geq 1,\,\,\forall \tau>0,\quad
\|u\|_{Q^{k}(\mathbb{R}^3)}
\leq
e^{C \, {\big(\frac{1}{\tau}\big)}^{\frac{\nu}{2-\nu}} \, k^{\frac{2}{2-\nu}}} \,
\|u\|_{E^{\tau}_{\nu}(\mathbb{R}^3)}.
\]
We denote the dual space of the Sobolev-type space $E^{\tau}_{\nu}(\mathbb{R}^3)$ by
$$\left(E^{\tau}_{\nu}(\mathbb{R}^3)\right)'=E^{-\tau}_{\nu}(\mathbb{R}^3).$$
Obviously, for $\nu>0$ fixed, when
$\tau_2>\tau_1$, one has
$E^{\tau_2}_{\nu}(\mathbb{R}^3)\subset E^{\tau_1}_{\nu}(\mathbb{R}^3);$
for $\tau$ fixed, when $\nu_2>\nu_1>0, $ one has
$E^{\tau}_{\nu_1}(\mathbb{R}^3)\subset E^{\tau}_{\nu_2}(\mathbb{R}^3).$
Then we have the following inclusions.
\begin{theorem}\label{relation}
Assume the cross section kernel $b$ of the linearized Boltzmann operator $\mathcal{L}$ defined in \eqref{b} with $s>0$.
For $\tau>0$, there exist constants $\tau_2>\tau_1>0$, such that
$$
E^{\tau\tau_2}_{s}(\mathbb{R}^3)\subset D_{\tau}(\mathcal{L})\subset E^{\tau\tau_1}_s(\mathbb{R}^3).
$$
In particular, when $0<s\leq2$, for $u\in D_1(\mathcal{L})$,
\begin{equation}\label{relation2}
\| u \|_{Q^{2\tau_1}} =
\|(e+\mathcal{H})^{\tau_1}u\|_{L^2}\leq \|e^{\tau_1 \, \left(\log(e+\mathcal{H})\right)^{\frac{2}{s}}}u\|_{L^2} <+\infty.
\end{equation}
\end{theorem}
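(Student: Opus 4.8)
The plan is to translate everything into the spectral basis $\{\varphi_{n,l,m}\}$, where both $\mathcal{L}+\mathbf{P}$ and $\mathcal{H}$ act diagonally, and then reduce the claimed inclusions to the elementary comparison between the weight
$\sum_k \tau^k (k!)^{-1} \bigl(\lambda_{n,l}^{\mathbf{P}}\bigr)^{k}$ generated by $D_\tau(\mathcal{L})$ and the weight $e^{2\tau'(\log(e+\mathcal H))^{2/s}}$ generated by $E^{\tau'}_s$. Here I write $\lambda^{\mathbf P}_{n,l}$ for the eigenvalue of $\mathcal{L}+\mathbf{P}$, which equals $\lambda_{n,l}$ when $n+l\ge 2$ and equals $1$ on the five-dimensional null space $\mathcal N$ (by Remark \ref{2.2}, since $\mathbf P$ adds $1$ there). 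First I would observe that for $u=\sum u_{n,l,m}\varphi_{n,l,m}$,
\[
\|u\|^2_{D_\tau(\mathcal L)}
=\sum_{n,l,m}\Bigl(\sum_{k=0}^{+\infty}\frac{\tau^k}{k!}\bigl(\lambda^{\mathbf P}_{n,l}\bigr)^{k}\Bigr)|u_{n,l,m}|^2
=\sum_{n,l,m} e^{\tau\lambda^{\mathbf P}_{n,l}}\,|u_{n,l,m}|^2 ,
\]
so that in fact $D_\tau(\mathcal L)$ is exactly the space with weight $e^{\tau\lambda^{\mathbf P}_{n,l}}$. Likewise $\|u\|^2_{E^{\tau'}_s}=\sum_{n,l,m} e^{2\tau'(\log(e+2n+l+3/2))^{2/s}}|u_{n,l,m}|^2$, using $\mathcal H\varphi_{n,l,m}=(2n+l+\tfrac32)\varphi_{n,l,m}$.

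With this reduction, the theorem follows from a two-sided pointwise bound on the exponents. By Proposition \ref{regular} there is $c_0>0$ with
$c_0(\log(2n+l+e))^{2/s}\le\lambda_{n,l}\le c_0^{-1}(\log(2n+l+e))^{2/s}$ for $n+l\ge 2$; for the finitely many remaining pairs $(n,l)$ with $n+l\le 1$ (where $\lambda^{\mathbf P}_{n,l}=1$), both exponents are bounded above and below by positive constants, so they only affect the comparison constants. Since $\log(e+2n+l+\tfrac32)$ and $\log(2n+l+e)$ are comparable up to multiplicative constants uniformly in $n,l$, I get constants $0<a_1<a_2$ such that
$a_1\,(\log(e+\mathcal H))^{2/s}\le \lambda^{\mathbf P}_{n,l}\le a_2\,(\log(e+\mathcal H))^{2/s}$ on every eigenspace. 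Multiplying by $\tau>0$ and exponentiating gives
$e^{2(\tau a_1/2)(\log(e+\mathcal H))^{2/s}}\le e^{\tau\lambda^{\mathbf P}_{n,l}}\le e^{2(\tau a_2/2)(\log(e+\mathcal H))^{2/s}}$,
which is precisely $\|u\|_{E^{\tau a_1/2}_s}\le C\|u\|_{D_\tau(\mathcal L)}$ and $\|u\|_{D_\tau(\mathcal L)}\le C\|u\|_{E^{\tau a_2/2}_s}$. Setting $\tau_1=a_1/2$ and $\tau_2=a_2/2$ yields $E^{\tau\tau_2}_s(\mathbb R^3)\subset D_\tau(\mathcal L)\subset E^{\tau\tau_1}_s(\mathbb R^3)$.

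For the final assertion \eqref{relation2}, take $u\in D_1(\mathcal L)$; by the inclusion just proved (with $\tau=1$), $u\in E^{\tau_1}_s(\mathbb R^3)$, so $\|e^{\tau_1(\log(e+\mathcal H))^{2/s}}u\|_{L^2}<+\infty$. When $0<s\le 2$ one has $\tfrac{2}{s}\ge 1$, hence $(\log(e+x))^{2/s}\ge\log(e+x)\ge \tfrac12\log(e+x)$ for... more carefully: for $x\ge 0$, $\tau_1(\log(e+x))^{2/s}\ge \tau_1\log(e+x)$ once $\log(e+x)\ge 1$, which holds for all $x\ge0$ since $e+x\ge e$; thus $e^{\tau_1(\log(e+\mathcal H))^{2/s}}\ge e^{\tau_1\log(e+\mathcal H)}=(e+\mathcal H)^{\tau_1}$ spectrally, giving $\|(e+\mathcal H)^{\tau_1}u\|_{L^2}\le\|e^{\tau_1(\log(e+\mathcal H))^{2/s}}u\|_{L^2}<+\infty$, which is \eqref{relation2}.

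I expect the only genuine subtlety to be bookkeeping the null-space eigenvalues: on $\mathcal N$ one has $\lambda_{n,l}=0$ but $\lambda^{\mathbf P}_{n,l}=1$, so the sum $\sum_k \tau^k(k!)^{-1}\|(\mathcal L+\mathbf P)^{k/2}u\|^2_{L^2}$ really does see the summand $e^{\tau}$ there rather than $1$; this does not affect any inclusion since it is a fixed finite-dimensional contribution, but it must be acknowledged so that the identity $\|u\|^2_{D_\tau(\mathcal L)}=\sum e^{\tau\lambda^{\mathbf P}_{n,l}}|u_{n,l,m}|^2$ is stated correctly and Proposition \ref{regular} (which only covers $n+l\ge 2$) is invoked only where it applies. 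Everything else is the elementary monotone comparison of exponential weights described above.
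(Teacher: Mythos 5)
Your proposal is correct and follows essentially the same route as the paper: it reduces both norms to diagonal weights via the spectral characterization (Proposition \ref{character}), compares the exponents using the two-sided eigenvalue estimate of Proposition \ref{regular} (with the finitely many null-space modes, where $\mathcal{L}+\mathbf{P}$ has eigenvalue $1$, absorbed into the constants), and derives \eqref{relation2} from $(\log(e+x))^{2/s}\ge\log(e+x)$ when $0<s\le 2$. No gaps.
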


\begin{remark}
By the conjugation property, we have
$$
E^{-\tau\tau_1}_{s}(\mathbb{R}^3)  \subset \left(D_{\tau}(\mathcal{L})\right)'
  \subset  E^{-\tau\tau_2}_s(\mathbb{R}^3).
$$
\end{remark}
Using the spectral decomposition,
we give some another expressions of the norm of
the spaces $D_{\tau}(\mathcal{L})$, $\left(D_{\tau}(\mathcal{L})\right)'$
and $E_{s}^{\tau}(\mathbb{R}^3)$.
\begin{proposition}\label{character}
Let us define for $n,l\in \mathbb{N}$
\begin{equation}\label{lambda tild}
\widetilde{\lambda}_{n,l} =
\left\{
\begin{aligned}
  &1 \quad n+l\leq 1,\\
  &\lambda_{n,l} \quad n+l \geq 2 .
\end{aligned}
\right.
\end{equation}
$(1)$   For $\tau>0$ and $u\in D_{\tau}(\mathcal{L})$, 
let $( u,\varphi_{n,l,m})$ be the inner product in $L^2(\mathbb{R}^3)$.
Therefore the sequence $\{( u,\varphi_{n,l,m})\}$ satisfies
\begin{equation*}
\|u\|^2_{D_{\tau}(\mathcal{L})}
= \sum^{+\infty}_{n=0}\sum^{+\infty}_{l=0}
\sum_{|m|\leq l}e^{\tau\widetilde{\lambda}_{n,l}}( u,\varphi_{n,l,m})^2<+\infty.
\end{equation*}
(2) For $\tau>0$ and $T\in \left(D_{\tau}(\mathcal{L})\right)'$, 
let $\langle T, \varphi_{n,l,m}\rangle$ be the inner product in sense of distribution.
Ana\-lo\-gously the sequence $\{\langle T, \varphi_{n,l,m}\rangle\}$ satisfies
\begin{equation*}
\|u\|^2_{(D_{\tau}(\mathcal{L}))'} =
\sum^{+\infty}_{n=0}\sum^{+\infty}_{l=0}\sum_{|m|\leq l}e^{-\tau\widetilde{\lambda}_{n,l}}
   \langle T, \varphi_{n,l,m}\rangle^2<+\infty.
\end{equation*}
$(3)$  For $\tau\in\mathbb{R}$,  $s>0$, let $u\in E_{s}^{\tau}(\mathbb{R}^3)$.
Therefore the sequence $\{\langle u,\varphi_{n,l,m}\rangle\}$ satisfies
\begin{equation*}
\|u\|^2_{E^{\tau}_{\nu}(\mathbb{R}^3)}=
 \sum^{+\infty}_{n=0}\sum^{+\infty}_{l=0}\sum_{|m|\leq l}e^{2\tau(\log(2n+l+\frac 32+e))^{\frac{2}{s}}}
 \langle u,\varphi_{n,l,m}\rangle^2<+\infty.
\end{equation*}
\end{proposition}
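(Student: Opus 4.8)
The plan is to establish all three identities by the same mechanism: express $u$ (or the distribution $T$) in the orthonormal basis $\{\varphi_{n,l,m}\}$ of $L^2(\mathbb{R}^3)$, apply the relevant self-adjoint operator spectrally, and then use Parseval's identity. The key observation is that $\mathcal{L}+\mathbf{P}$, $\mathcal{H}$ and all functions of them built by spectral calculus are diagonalized by this common basis, so the Hilbert-space norms involved are just weighted $\ell^2$ norms of the coefficient sequences.

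For part $(1)$: write $u=\sum_{n,l,m}u_{n,l,m}\,\varphi_{n,l,m}$ with $u_{n,l,m}=(u,\varphi_{n,l,m})$. Since $\mathbf{P}$ is the orthoprojection onto $\mathcal{N}=\mathrm{span}\{\varphi_{n,l,m}:n+l\leq1\}$ and $\mathcal{L}\varphi_{n,l,m}=\lambda_{n,l}\varphi_{n,l,m}$, the operator $\mathcal{L}+\mathbf{P}$ acts diagonally with eigenvalue $\lambda_{n,l}$ when $n+l\geq2$ and eigenvalue $1$ (that is, $0+1$) when $n+l\leq1$ — which is exactly $\widetilde\lambda_{n,l}$ as defined in \eqref{lambda tild}. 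Hence $(\mathcal{L}+\mathbf{P})^{k/2}u=\sum_{n,l,m}\widetilde\lambda_{n,l}^{\,k/2}u_{n,l,m}\varphi_{n,l,m}$, and by Parseval $\|(\mathcal{L}+\mathbf{P})^{k/2}u\|_{L^2}^2=\sum_{n,l,m}\widetilde\lambda_{n,l}^{\,k}u_{n,l,m}^2$. Plugging into the defining series for $\|u\|_{D_\tau(\mathcal{L})}^2$ and summing the $k$-series first,
\[
\sum_{k=0}^{+\infty}\frac{\tau^k}{k!}\sum_{n,l,m}\widetilde\lambda_{n,l}^{\,k}u_{n,l,m}^2
=\sum_{n,l,m}u_{n,l,m}^2\sum_{k=0}^{+\infty}\frac{(\tau\widetilde\lambda_{n,l})^k}{k!}
=\sum_{n,l,m}e^{\tau\widetilde\lambda_{n,l}}u_{n,l,m}^2,
\]
where the interchange of the two sums is legitimate by Tonelli's theorem since every term is nonnegative. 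This gives the claimed formula, the finiteness being precisely the membership condition $u\in D_\tau(\mathcal{L})$.

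For part $(2)$: by duality, $(D_\tau(\mathcal{L}))'$ is identified with the completion of finite linear combinations of the $\varphi_{n,l,m}$ under the weighted $\ell^2$ norm with weights $e^{-\tau\widetilde\lambda_{n,l}}$; concretely, writing $T_{n,l,m}=\langle T,\varphi_{n,l,m}\rangle$, the dual norm of $T$ is $\sup\{\langle T,v\rangle : \|v\|_{D_\tau(\mathcal{L})}\leq1\}$, and optimizing over $v$ with coefficients $v_{n,l,m}$ subject to $\sum e^{\tau\widetilde\lambda_{n,l}}v_{n,l,m}^2\leq1$ (Cauchy–Schwarz with the weight split as $e^{\tau\widetilde\lambda/2}\cdot e^{-\tau\widetilde\lambda/2}$) yields exactly $\|T\|_{(D_\tau(\mathcal{L}))'}^2=\sum_{n,l,m}e^{-\tau\widetilde\lambda_{n,l}}T_{n,l,m}^2$. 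For part $(3)$: since $\mathcal{H}\varphi_{n,l,m}=(2n+l+\tfrac32)\varphi_{n,l,m}$, spectral calculus gives $e^{\tau(\log(e+\mathcal{H}))^{2/s}}\varphi_{n,l,m}=e^{\tau(\log(2n+l+\frac32+e))^{2/s}}\varphi_{n,l,m}$, and Parseval again produces the stated weighted sum. The main (minor) obstacle is a careful justification of the duality pairing in $(2)$ — that the series-defined norm genuinely represents the dual norm, i.e. that $D_\tau(\mathcal{L})$ is reflexive as a weighted $\ell^2$ space and the pairing extends continuously — but this is standard once the coefficient-space picture of $(1)$ is in hand; everything else is routine spectral bookkeeping and an appeal to Tonelli for the sum interchanges.
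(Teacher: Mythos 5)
Your proposal is correct and follows essentially the same route as the paper: diagonalize $\mathcal{L}+\mathbf{P}$ and $\mathcal{H}$ in the common eigenbasis $\{\varphi_{n,l,m}\}$, apply Parseval, interchange the $k$-sum with the coefficient sum to produce the exponential weight in part (1), and obtain part (2) by Cauchy--Schwarz with the split weight together with the extremizing (truncated, normalized) test function --- which is exactly the paper's $u_N$ construction for the lower bound and its limiting argument for the upper bound. The one point you flag as needing care (that the weighted-$\ell^2$ expression genuinely equals the dual norm) is handled in the paper by precisely the finite truncation and passage to the limit you describe, so there is no gap.
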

\begin{proof}
The proof of part $(1)$ is direct:
From the spectral decomposition \eqref{Lg=Sum}
\begin{align*}
\|u\|^2_{D_{\tau}(\mathcal{L})}
&=\sum^{+\infty}_{k=0}\tau^k(k!)^{-1}\|(\mathcal{L}+\mathbf{P})^\frac{k}{2}u\|^2_{L^2}
\\
&= \sum^{+\infty}_{k=0}\tau^k(k!)^{-1}
\sum^{+\infty}_{n=0}\sum^{+\infty}_{l=0}\sum_{|m|\leq l}
 \widetilde{\lambda}_{n,l}^k ( u,\varphi_{n,l,m})^2.
\end{align*}
Now we prove part $(2)$: 
From the above characterization of $D_{\tau}(\mathcal{L})$, we find that
$\varphi_{n,l,m}\in D_{\tau}(\mathcal{L})$.\,\,Then for any $T\in \big(D_{\tau}(\mathcal{L})\big)'$, $\langle T,\varphi_{n,l,m}\rangle$ is well-defined.\,\,
We cons\-truct the following smooth function
$$u_{N}(v)=\frac{1}{C_{N}}\sum_{\substack{2n+l\leq N\\n\geq0,l\geq0}}\sum_{|m|\leq l}e^{-\tau\widetilde{\lambda}_{n,l}}\langle T,\varphi_{n,l,m}\rangle\varphi_{n,l,m}(v),$$
where
$$C_{N}=\sqrt{\sum_{\substack{2n+l\leq N\\n\geq0,l\geq0}}\sum_{|m|\leq l}e^{-\tau\widetilde{\lambda}_{n,l}}\langle T,\varphi_{n,l,m}\rangle^2}.$$
Applying the result of $(1)$, we find that
$$\|u_{N}\|_{D_{\tau}(\mathcal{L})}=1.$$
From the definition of $\big(D_{\tau}(\mathcal{L})\big)'$, one can verify
$$\|T\|_{\big(D_{\tau}(\mathcal{L})\big)'}\geq\langle T,u_{N}\rangle=\sqrt{\sum_{\substack{2n+l\leq N\\n\geq0,l\geq0}}\sum_{|m|\leq l}e^{-\tau\widetilde{\lambda}_{n,l}}\langle T,\varphi_{n,l,m}\rangle^2}.$$
Passing $N$ to $+\infty$, we have
\begin{equation}\label{low}
\|T\|_{\big(D_{\tau}(\mathcal{L})\big)'}\geq\sqrt{\sum_{n\geq0}\sum_{l\geq0}\sum_{|m|\leq l}e^{-\tau\widetilde{\lambda}_{n,l}}\langle T,\varphi_{n,l,m}\rangle^2}.
\end{equation}
On the other hand, for any $u\in D_{\tau}(\mathcal{L})$ with $\|u\|_{D_{\tau}(\mathcal{L})}$=1, we define a series
$$u_N=\sum_{\substack{2n+l\leq N\\n\geq0,l\geq0}}\sum_{|m|\leq l}( u,\varphi_{n,l,m})\varphi_{n,l,m}.$$
Then $u_N\rightarrow u\in D_{\tau}(\mathcal{L})\,\,\text{as}\,N\rightarrow+\infty$ with $\|u_N\|_{D_{\tau}(\mathcal{L})}\leq1$.\,\,
Therefore,
\begin{align*}
|\langle T,u_N\rangle|&=|\sum_{\substack{2n+l\leq N\\n\geq0,l\geq0}}\sum_{|m|\leq l}( u,\varphi_{n,l,m})\langle T,\varphi_{n,l,m}\rangle|\\
&\leq\Big(\sum_{\substack{2n+l\leq N\\n\geq0,l\geq0}}\sum_{|m|\leq l}e^{-\tau\widetilde{\lambda}_{n,l}}\langle T,\varphi_{n,l,m}\rangle^2\Big)^\frac{1}{2}\|u_N\|_{D_{\tau}(\mathcal{L})}\\
&\leq\Big(\sum_{\substack{2n+l\leq N\\n\geq0,l\geq0}}\sum_{|m|\leq l}e^{-\tau\widetilde{\lambda}_{n,l}}\langle T,\varphi_{n,l,m}\rangle^2\Big)^\frac{1}{2}.
\end{align*}
By continuity,
$$|\langle T,u\rangle|=\lim_{N\rightarrow+\infty}|\langle T,u_N\rangle|$$
and we obtain
\begin{equation}\label{up}
\|T\|_{\big(D_{\tau}(\mathcal{L})\big)'}\leq\Big(\sum_{n\geq0}\sum_{l\geq0}\sum_{|m|\leq l}e^{-\tau\widetilde{\lambda}_{n,l}}\langle T,\varphi_{n,l,m}\rangle^2\Big)^{1/2}.
\end{equation}
Part $(2)$ follows from \eqref{low} and \eqref{up}.
\\
For the part $(3)$, note that $\left\{\varphi_{n,l,m}(v)\right\}$ constitutes an orthonormal basis of $L^2(\mathbb{R}^3)$ composed of eigenvectors of the harmonic oscillator $\mathcal{H}$, then
\begin{equation*}
e^{\tau \, \left(\log(e+\mathcal{H})\right)^{\frac{2}{s}}}\varphi_{n, l, m}=e^{\tau(\log(2n+l+\frac 32+e))^{\frac{2}{s}}}\, \varphi_{n, l, m}.
\end{equation*}
This ends the proof of Proposition \ref{character}.
\end{proof}
\begin{remark}
For $\tau>0$ and $u\in D^+_{\tau}(\mathcal{L})$, 
let $( u,\varphi_{n,l,m})$ be the inner product in $L^2(\mathbb{R}^3)$.
Therefore the sequence $\{( u,\varphi_{n,l,m})\}$ satisfies
\begin{equation*}
\|u\|^2_{D^+_{\tau}(\mathcal{L})}
= \sum^{+\infty}_{n=0}\sum^{+\infty}_{l=0}\sum_{|m|\leq l}\widetilde{\lambda}_{n,l}e^{\tau\widetilde{\lambda}_{n,l}}( u,\varphi_{n,l,m})^2<+\infty.
\end{equation*}
For $\tau>0$ and $T\in \left(D^+_{\tau}(\mathcal{L})\right)'$, let $\langle T, \varphi_{n,l,m}\rangle$ be the inner product in sense of distribution.
Ana\-lo\-gously the sequence $\{\langle T, \varphi_{n,l,m}\rangle\}$ satisfies
\begin{equation*}
\|u\|^2_{(D^+_{\tau}(\mathcal{L}))'} =
\sum^{+\infty}_{n=0}\sum^{+\infty}_{l=0}\sum_{|m|\leq l}\frac{1}{\widetilde{\lambda}_{n,l}}e^{-\tau\widetilde{\lambda}_{n,l}}
   \langle T, \varphi_{n,l,m}\rangle^2<+\infty.
\end{equation*}
\end{remark}

We are prepared to prove Theorem \ref{relation}.
\begin{proof}[Proof of Theorem \ref{relation}:]
Applying Proposition \ref{regular}, for $\tau>0$, there exist constant $\tau_2>\tau_1>0$,   such that
\begin{align*}
\sum^{+\infty}_{n=0}\sum^{+\infty}_{l=0}\sum_{|m|\leq l}
  e^{2\tau\tau_1\left(\log(2n+l+\frac{3}{2}+e)\right)^{\frac{2}{s}}}
  (u,  &  \varphi_{n,l,m})^2
\leq
\sum^{+\infty}_{n=0} \sum^{+\infty}_{l=0}\sum_{|m|\leq l}
  e^{\tau\widetilde{\lambda}_{n,l}}(u,\varphi_{n,l,m})^2\\
&\leq
\sum^{+\infty}_{n=0}\sum^{+\infty}_{l=0}\sum_{|m|\leq l}
  e^{2\tau\tau_2\left(\log(2n+l+\frac{3}{2}+e)\right)^{\frac{2}{s}}}
  (u,\varphi_{n,l,m})^2.
\end{align*}
From Proposition \ref{character}, we have
$$E^{\tau\tau_2}_s(\mathbb{R}^3)\subset D_{\tau}(\mathcal{L})\subset E^{\tau\tau_1}_s(\mathbb{R}^3).$$
In addition, when $0<s\leq2$, for $u\in D_1(\mathcal{L})$,
$$e^{2\tau_1\left(\log(2n+l+e)\right)^{\frac{2}{s}}}\geq e^{2\tau_1\left(\log(2n+l+\frac{3}{2}+e)\right)}=(2n+l+\frac{3}{2}+e)^{2\tau_1}.$$
Therefore
\begin{align*}
\|(e+\mathcal{H})^{\tau_1}u\|^2_{L^2}&=\sum^{+\infty}_{n=0}\sum^{+\infty}_{l=0}\sum_{|m|\leq l}(2n+l+\frac{3}{2}+e)^{2\tau_1}( u,\varphi_{n,l,m})^2\\
&\leq\sum^{+\infty}_{n=0}\sum^{+\infty}_{l=0}\sum_{|m|\leq l}e^{2\tau_1\left(\log(2n+l+\frac{3}{2}+e)\right)^{\frac{2}{s}}}( u,\varphi_{n,l,m})^2\\
&=\|u\|^2_{E^{\tau_1}_s}<+\infty.
\end{align*}
This ends the proof of Theorem \ref{relation}.
\end{proof}

\section{Proof of Theorems \ref{trick}-\ref{trick2}}\label{S4}

Now we are prepared to prove Theorem \ref{trick}.
\begin{proof}[\bf{Proof of Theorem \ref{trick}}]
We proceed to treat the proof by the following four steps.

{\bf Step 1.}\,\,Construction of an auxiliary function $g_{N}$~with initial datum
which approximates $g_{0}\in \left(D_{\tau}(\mathcal{L})\right)'.$

\indent For $g_{0}\in \left(D_{\tau}(\mathcal{L})\right)'$, 
$\widetilde{\lambda}_{n,l}$ defined in \eqref{lambda tild},
we obtain from Proposition \ref{character}
\begin{equation}\label{l2-expansion}
\sum^{+\infty}_{n=0}\sum^{+\infty}_{l=0}\sum_{|m|\leq l}
e^{-\tau\widetilde{\lambda}_{n,l}}
\langle g_0, \varphi_{n,l,m}\rangle^2
=\|g_0\|^2_{\left(D_{\tau}(\mathcal{L})\right)'}<+\infty.
\end{equation}
For all $n,l\in\mathbb{N},~m\in\{-l,\cdots,l\}$, 
we consider the Cauchy problem associated with the ODEs
\begin{equation*}
\left\{ \begin{aligned}
         &\partial_{t}a_{n,l,m}(t)+\lambda_{n,l}a_{n,l,m}(t)=0,\\
                  &a_{n,l,m}(0)=\langle g_0, \varphi_{n,l,m}\rangle.
                          \end{aligned} \right.
                          \end{equation*}
Direct calculation shows that $a_{n,l,m}(t)=e^{-\lambda_{n,l}t}\langle g_0, \varphi_{n,l,m}\rangle$.
Let us now fix some positive integer $N\geq3$ and define
the following function
$g_{N}:[0,+\infty[\times\mathbb{R}^{3}\rightarrow L^2(\mathbb{R}^3)$ by
\begin{align*}%\label{tg}
g_{N}=\sum_{\substack{2n+l\leq N\\n\geq0,l\geq0}}\sum_{|m|\leq l}e^{-\lambda_{n,l}t}\langle g_0, \varphi_{n,l,m}\rangle \varphi_{n,l,m}.
\end{align*}
Then $g_N$ satisfies
\begin{equation}\label{eq-3}
\left\{ \begin{aligned}
&\partial_{t}g_N+\mathcal{L}g_N=0,\,\,\\
&g_N(0)=\sum_{\substack{2n+l\leq N\\n\geq0,l\geq0}}\langle g_0, \varphi_{n,l,m}\rangle  \varphi_{n,l,m}.
\end{aligned} \right.
\end{equation}

\smallskip
{\bf Step 2.} Existence of the solution to the Cauchy problem \eqref{eq-1}.\\
\indent It is obvious that,
$$\mathbf{P}g_N\equiv\mathbf{P}g_0.$$
For $N\in\mathbb{N}$ big enough and for any $P\in\mathbb{N}^+,$
\begin{align*}
\forall t>0,\,\,\|g_{N+P}-g_N\|^2_{\left(D_{\tau}(\mathcal{L})\right)'}
&=
\sum_{\substack{N+1\leq2n+l\leq N+P\\n\geq0,l\geq0}}
\sum_{|m|\leq\,l}e^{-2\lambda_{n,l}t}
  e^{-\lambda_{n,l}\tau}|\langle g_0, \varphi_{n,l,m}\rangle|^2\\
&\leq
\sum_{\substack{N+1\leq2n+l\leq N+P\\n\geq0,l\geq0}}
\sum_{|m|\leq\,l}
  e^{-\lambda_{n,l}\tau}|\langle g_0, \varphi_{n,l,m}\rangle|^2
\rightarrow0.
\end{align*}
Let us fix $T>0$, $N\geq3$ big enough.  
Using the estimate in Proposition \ref{regular}, we can check that
\begin{align*}
&\int^T_0
  \|g_{N+P}-g_N\|^2_{\left(D^+_{\tau}(\mathcal{L})\right)'}dt \\
=&
\sum_{\substack{N+1\leq2n+l\leq N+P\\n\geq0,l\geq0}}
\sum_{|m|\leq\,l}
\lambda_{n,l}^{-1}e^{-\lambda_{n,l}\tau}|\langle g_0, \varphi_{n,l,m}\rangle|^2\frac{1}{2\lambda_{n,l}}(1-e^{-2\lambda_{n,l}T})\\
\lesssim&
\sum_{\substack{N+1\leq2n+l\leq N+P\\n\geq0,l\geq0}}
\sum_{|m|\leq\,l}
  e^{-\lambda_{n,l}\tau}|\langle g_0, \varphi_{n,l,m}\rangle|^2\rightarrow0;
\end{align*}
\begin{align*}
&\int^T_0
  \|\partial_tg_{N+P}-\partial_tg_N\|^2_{\left(D^+_{\tau}(\mathcal{L})\right)'}dt\\
=&
\sum_{\substack{N+1\leq2n+l\leq N+P\\n\geq0,l\geq0}}
\sum_{|m|\leq\,l}
  e^{-\lambda_{n,l}\tau}|\langle g_0, \varphi_{n,l,m}\rangle|^2\frac{\lambda_{n,l}}{2\lambda_{n,l}}(1-e^{-2\lambda_{n,l}T})\\
\leq&
\frac{1}{2}
\sum_{\substack{N+1\leq2n+l\leq N+P\\n\geq0,l\geq0}}
\sum_{|m|\leq\,l}
  e^{-\lambda_{n,l}\tau}|\langle g_0, \varphi_{n,l,m}\rangle|^2\rightarrow0;
\end{align*}
\begin{align*}
&\int^T_0
\|\mathcal{L}^{\frac{1}{2}}g_{N+P}-\mathcal{L}^{\frac{1}{2}}g_N\|^2_{\left(D_{\tau}(\mathcal{L})\right)'}dt\\
\leq&\frac{1}{2}
\sum_{\substack{N+1\leq2n+l\leq N+P\\n\geq0,l\geq0}}
\sum_{|m|\leq\,l}
  e^{-\lambda_{n,l}\tau}|\langle g_0, \varphi_{n,l,m}\rangle|^2\rightarrow0.
\end{align*}
Therefore, for $T>0$ fixed,
\begin{align*}
&\{g_N(t)\}\,\, \text{is a Cauchy sequence in} \,\,\left(D_{\tau}(\mathcal{L})\right)',
\,\, \forall t \in [0,T];\\
&\{g_N\}\,\, \text{is a Cauchy sequence in} \,\,H^1([0,T], \left(D^+_{\tau}(\mathcal{L})\right)');\\
&\{\mathcal{L}^{\frac{1}{2}}g_N\} \,\, \text{is a Cauchy sequence in} \,\,L^{2}([0,T], \left(D_{\tau}(\mathcal{L})\right)').
\end{align*}
Then there exists a function $g\in L^{\infty}([0,T], \left(D_{\tau}(\mathcal{L})\right)')\bigcap H^1([0,T],\left(D^+_{\tau}(\mathcal{L})\right)')$, $\mathbf{P}g\equiv\mathbf{P}g_0$ and $\mathcal{L}^{\frac{1}{2}}g\in L^{2}([0,T], \left(D_{\tau}(\mathcal{L})\right)') $, such that
\begin{align*}
&\forall t>0,\,\,g_N(t)\rightarrow g(t)\,\,\,\text{in}\, \left(D_{\tau}(\mathcal{L})\right)',\\
&g_N\rightarrow g\,\,\,\text{in}\, H^1([0,T],\,\,\left(D^+_{\tau}(\mathcal{L})\right)'),\\
&\mathcal{L}^{\frac{1}{2}}g_N\rightarrow \mathcal{L}^{\frac{1}{2}}g\,\,\,\,\text{in}\,L^{2}([0,T], \left(D_{\tau}(\mathcal{L})\right)').
\end{align*}
By Sobolev embedding theorem
$$H^1([0,T],\,\,\left(D^+_{\tau}(\mathcal{L})\right)')\hookrightarrow\,C([0,T],\left(D^+_{\tau}(\mathcal{L})\right)'),$$
we have
$$g\in C([0,T],\left(D^+_{\tau}(\mathcal{L})\right)').$$
Now we prove $g$~is the desired weak solution of Cauchy problem \eqref{eq-1}.\,\,
For any test function $\phi\in C^1([0,T],C_{0}^{\infty}(\mathbb{R}^{3}))$ and $0<t<T$,~recalled from \eqref{eq-3},~we have
\begin{align*}
&\langle g_N(t),\phi(t)\rangle-\langle g_N(0),\phi(0)\rangle\\
&=\int^{t}_{0}\langle g_N(\tau),\partial_{\tau}\phi\rangle d\tau-\int^{t}_{0}\langle g_N(\tau),\mathcal{L}\phi(\tau)\rangle d\tau.\nonumber
\end{align*}
Passing to the limit as $N\rightarrow+\infty$,~we get
\begin{align}\label{equation}
&\langle g(t),\phi(t)\rangle-\langle g(0),\phi(0)\rangle\nonumber\\
&=\int^{t}_{0}\langle g,\partial_{\tau}\phi\rangle d\tau-\int^{t}_{0}\langle g,\mathcal{L}\phi\rangle d\tau.
\end{align}
Besides,~from \eqref{l2-expansion}, we see that
\begin{equation*}
g_N(0)\rightarrow g(0)\, \text{in}\,\left(D_{\tau}(\mathcal{L})\right)'.
\end{equation*}
Henceforth,~we obtain
$$\langle g(0),\phi(0)\rangle=\langle g_{0},\phi(0)\rangle.$$
Substituting the above result into \eqref{equation}, \eqref{def} follows.

\smallskip
{\bf Step 3.} Uniqueness of the solution to the Cauchy problem \eqref{eq-1}.\\
\indent Assume that $\tilde{g}$ is another solution satisfies \eqref{solution} and \eqref{def}.\,\,Denote
$$h(t)=g(t)-\tilde{g}(t).$$
For $T>0$, for any $\phi\in C^1([0,T],C_{0}^\infty(\mathbb{R}^{3}))$ and $0<t<T$, we have $h(0)=0$ and
\begin{equation}\label{unique}
\langle h(t),\phi(t)\rangle=\int^{t}_{0}\langle h,\partial_{\tau}\phi\rangle d\tau
-\int^{t}_{0}\langle h,\mathcal{L}\phi(\tau)\rangle d\tau.
\end{equation}
We define a smooth function
$$\phi(t)=\sum_{\substack{2n+l\leq N\\n\geq0,l\geq0}}e^{-2\tau\widetilde{\lambda}_{n,l}}\langle h(t), \varphi_{n,l,m}\rangle \varphi_{n,l,m}.$$
Substituted into \eqref{unique} as a test function, we have
$$
\sum_{\substack{2n+l\leq N\\n\geq0,l\geq0}}e^{-2\tau\widetilde{\lambda}_{n,l}}\langle h(t),\varphi_{n,l,m}\rangle^2\leq0.
$$
Passing $N\rightarrow+\infty$,~we have
$$\|h(t)\|_{\left(D_{\tau}(\mathcal{L})\right)'}^2\leq0.$$
Thus $h=0\,\,in\,\, L^2(\mathbb{R}^3)$.

\smallskip
{\bf Step 4.} Regularity of the solution with the initial data $g_0\in E^{-t_0}_s(\mathbb{R}^3)$.\\
\indent
Using Theorem \ref{relation}, we have
$$D_{t_0/\tau_1}(\mathcal{L})\subset E^{t_0}_s(\mathbb{R}^3)
\quad\text{and}\quad
E^{-t_0}_s(\mathbb{R}^3)\subset \left(D_{t_0/\tau_1}(\mathcal{L})\right)$$
where $\tau_1$ is given by Theorem \ref{relation}.
Namely,
$$\|e^{- t_0 \, \left(\log(e+\mathcal{H})\right)^{\frac{2}{s}}}g_0 \|_{L^2} < +\infty.$$
Recalling that $g_0=\mathbf{P}g_0+(\mathbf{I}-\mathbf{P})g_0$, we have
\begin{align}
(\mathbf{I}-\mathbf{P})g_N(t)=\sum_{\substack{2n+l\leq N\\n\geq0,l\geq0}}e^{-\lambda_{n,l}t}\langle (\mathbf{I}-\mathbf{P})g_0,\varphi_{n,l,m}\rangle\varphi_{n,l,m}.\nonumber
\end{align}
Then for any $T>0$,
$$(\mathbf{I}-\mathbf{P})g_N(t)\rightarrow g\,\,\,\text{in}\, L^{\infty}([0,T], \left(D_{t_0/\tau_1}(\mathcal{L})\right)').$$
Moreover, from Proposition \ref{regular},
there exists a constant $c>0$ such that for any $t>t_0/c$
\begin{align*}
\|e^{ct\left(\log(\mathcal{H}+e)\right)^{\frac{2}{s}}}(\mathbf{I}-\mathbf{P})g_N\|_{L^2(\mathbb{R}^3)}
\leq e^{-\frac{1}{4}\lambda_{2,0}t}\|(\mathbf{I}-\mathbf{P})g_{0}\|_{E^{-t_0}_s(\mathbb{R}^3)},
\end{align*}
where we have used the following estimate of the eigenvalue
(see Part 4.3 of \cite{GLX_2015})
$$0<\lambda_{2,0}\leq \lambda_{n,l}.$$
By the lower continuity, we have
\begin{align}\label{final}
&\|e^{ct\left(\log(\mathcal{H}+e)\right)^{\frac{2}{s}}}(\mathbf{I}-\mathbf{P})g\|_{L^2(\mathbb{R}^3)}\nonumber\\
&\leq\liminf_{N\rightarrow+\infty}\|e^{ct\left(\log(\mathcal{H}+e)\right)^{\frac{2}{s}}}(\mathbf{I}-\mathbf{P})g_N\|_{L^2(\mathbb{R}^3)}\nonumber\\
&\leq  e^{-\frac{1}{4}\lambda_{2,0}t}\|(\mathbf{I}-\mathbf{P})g_{0}\|_{E^{-t_0}_s(\mathbb{R}^3)}.
\end{align}
This concludes the proof of Theorem \ref{trick}.\\
\end{proof}

We now give the proof of Theorem \ref{trick2}

\begin{proof}
Consider that $g_0\in L^2(\mathbb{R}^3)$. 
For $0<s\leq2$, by using the formula \eqref{relation2} in Theorem \ref{relation} 
and the formula \eqref{final}, we obtain,
\begin{align*}
\|\left(\mathcal{H}+e\right)^{ct}(\mathbf{I}-\mathbf{P})g\|_{L^2(\mathbb{R}^3)}\leq  e^{-\frac{1}{4}\lambda_{2,0}t}\|(\mathbf{I}-\mathbf{P})g_{0}\|_{L^2(\mathbb{R}^3)}.
\end{align*}
This is the formula \eqref{rate1}.

For the part $2)$ of Theorem \ref{trick2}, i.e, in the case $0<s<2$,
the formula \eqref{rate2} follows from Proposition \ref{sobolev-type} and formula \eqref{final}.
The proof of Theorem \ref{trick2} is completed.
\end{proof}

Now we study the regularizing effect of the Boltzmann equation 
with the intial data in Sobolev space, 
which provides a detailed exposition of Remark \ref{remark}.

Recalled that  $\left\{\varphi_{n,l,m}(v)\right\}_{n,l\in \mathbb{N},|m|\leq l}$ constitutes an orthonormal basis of $L^2(\mathbb{R}^3)$ composed of eigenvectors of the harmonic oscillator
(see\cite{Boby}, \cite{NYKC2})
\begin{equation*}
\mathcal{H}(\varphi_{n, l, m})=(2n+l+\frac 32)\, \varphi_{n, l, m}.
\end{equation*}
For $\tau>0$ (see \eqref{Q2tau} in the Appendix),
$$\|u\|^2_{Q^{\tau}(\mathbb{R}^3)}=\|(\mathcal{H}+e)^{\frac{\tau}{2}}u\|^2_{L^2}=\sum^{+\infty}_{n=0}\sum^{+\infty}_{l=0}\sum_{|m|\leq l}(2n+l+\frac{3}{2}+e)^{\tau}(u,\varphi_{n, l, m})^2_{L^2}.$$
We consider the example in Remark \ref{remark}.

In the case $s=2$, the regularizing effect in $Q^{k}(\mathbb{R}^3)$ has usually a positive time delay.
\begin{Example}
Consider the initial data
$g_0=\sum_{n\geq 2} \frac{1}{n^{\frac12} \log n}   \phi_{n,0,0}$, then
\begin{align*}
\|g_0\|_{L^2(\mathbb{R}^3)}^2=\sum^{+\infty}_{n=2}\frac{1}{n(\log n)^2}<+\infty.
\end{align*}
This shows that $g_0\in L^2(\mathbb{R}^3)$.
The solution of the Cauchy problem \eqref{eq-1} can be written as
\begin{align*}
g(t)=\sum^{+\infty}_{n=2}e^{-t\lambda_{n,0}}\frac{1}{n^{\frac12} \log n}   \phi_{n,0,0}.
\end{align*}
By using Proposition \ref{regular}, for $s=2$, we have
$$\lambda_{n,0}\approx \log(2n+e).$$
Then
\begin{align*}
\|g(t)\|^2_{Q^k(\mathbb{R}^3)}&=\sum^{+\infty}_{n=2}(2n+\frac{3}{2}+e)^{k}e^{-2t\lambda_{n,0}}\frac{1}{n(\log n)^2}\\
&\approx\sum^{+\infty}_{n=2}(2n+\frac{3}{2}+e)^{k}\frac{1}{n^{1+2t}(\log n)^2},
\end{align*}
which is convergent when $2t\geq k$, i.e. $t\geq\frac{k}{2}$.

We can check that there exists $t_k=\frac{k}{2}$ such that $g(t)\not\in Q^{k}(\mathbb{R}^3)$ for $0\leq t<t_k$.
\end{Example}
In the case $s>2$. We prove that there is no regularizing effect in the Sobolev space.
\begin{Example}
Consider any real numbers $0<\tau<\tau'$ and
$g_0=\sum_{n\geq 2} \frac{1}{n^{\frac{\tau+1}2} \log n} \phi_{n,0,0}$.
Therefore we have
\begin{align*}
\|g_0\|^2_{Q^{\tau}(\mathbb{R}^3)}&=\sum^{+\infty}_{n=2}(2n+\frac{3}{2}+e)^{\tau}\frac{1}{n^{\tau+1} (\log n)^2}\\
&\approx\sum^{+\infty}_{n=2}\frac{1}{n (\log n)^2}<+\infty.
\end{align*}
This means that $g_0\in Q^{\tau}(\mathbb{R}^3).$

However, the solution of the Cauchy problem \eqref{eq-1} can be written as
\begin{align*}
g(t)=\sum^{+\infty}_{n=2}e^{-t\lambda_{n,0}}\frac{1}{n^{\frac{\tau+1}2} \log n} \phi_{n,0,0}.
\end{align*}
It is easy to prove that $g(t)\in Q^{\tau}(\mathbb{R}^3).$  Now we prove $g(t)
\notin Q^{\tau'}(\mathbb{R}^3).$

In the case $s>2$, by using the Proposition \ref{regular}, we obtain
$$\lambda_{n,0}\approx (\log(2n+e))^{\frac{2}{s}}.$$
Therefore, there exists a constant $c_0>0$ such that
\begin{align*}
\|g(t)\|^2_{Q^{\tau'}(\mathbb{R}^3)}&=\sum^{+\infty}_{n=2}(2n+\frac{3}{2}+e)^{\tau'}e^{-2t\lambda_{n,0}}\frac{1}{n^{\tau+1} (\log n)^2}\\
&\leq\sum^{+\infty}_{n=2}\frac{n^{\tau'-\tau}}{e^{2tc_0(\log(2n+e))^{\frac{2}{s}}}n (\log n)^2}\\
&=\sum^{+\infty}_{n=2}e^{{\frac{\tau'-\tau}{2}\log (n)-2tc_0(\log(2n+e))^{\frac{2}{s}}}}\frac{1}{n^{1-\frac{\tau'-\tau}{2}} (\log n)^2}.
\end{align*}
Considering the condition that $\tau'>\tau$ and $s>2$, one can verify that
$${\frac{\tau'-\tau}{2}\log (n)-2tc_0(\log(2n+e))^{\frac{2}{s}}}\rightarrow+\infty,\,\,\text{as}\,\,n\rightarrow+\infty.$$
Since
$$\sum^{+\infty}_{n=2}\frac{1}{n^{1-\frac{\tau'-\tau}{2}} (\log n)^2}=+\infty,$$
we obtain
$$
\|g(t)\|^2_{Q^{\tau'}(\mathbb{R}^3)}=+\infty.$$
We conclude 
$$g(t)\notin Q^{\tau'}(\mathbb{R}^3).$$
We can check that for $t\geq0$ the solution $g(t)$ stays in the space $\in Q^{\tau}(\mathbb{R}^3)$,
but never belongs to $Q^{\tau'}(\mathbb{R}^3)$.
\end{Example}

\section{Appendix}\label{Appendix}

We present in this section some spectral properties
of the functional spaces used in this paper and the proof of lemma \ref{lem estim Pl}.\\

The symmetric Gelfand-Shilov space $S^{\nu}_{\nu}(\mathbb{R}^3)$ can be characterized through the decomposition
into the Hermite basis $\{H_{\alpha}\}_{\alpha\in\mathbb{N}^3}$ and the harmonic oscillator $\mathcal{H}=-\triangle +\frac{|v|^2}{4}$.
For more details, see Theorem 2.1 in the book \cite{GPR}.
\begin{align*}
f\in S^{\nu}_{\nu}(\mathbb{R}^3)
&\Leftrightarrow\,f\in C^\infty (\mathbb{R}^3),\,\,\exists\, \tau>0,
\bigl\|\,e^{\tau\mathcal{H}^{\frac{1}{2\nu}}}f\,\bigr\|_{L^2}<+\infty;
\\
&\Leftrightarrow\, f\in\,L^2(\mathbb{R}^3),\,\,\exists\, \epsilon_0>0,\,\,
\Bigl\|\,\Big(le^{\epsilon_0|\alpha|^{\frac{1}{2\nu}}}(f\,H_{\alpha})_{L^2}\Bigr)_{\alpha\in\mathbb{N}^3}\Bigr\|_{l^2}<+\infty;
\\
&\Leftrightarrow\,\exists\,C>0,\,A>0,\,\,
\Bigl\|\,\Bigl(-\triangle +\frac{|v|^2}{4}\Bigr)^{\frac{k}{2}}f\,\Bigr\|_{L^2(\mathbb{R}^3)}
\leq A\,C^k\,(k!)^{\nu},\,\,\,k\in\mathbb{N}
\end{align*}
where
$$H_{\alpha}(v)=H_{\alpha_1}(v_1)H_{\alpha_2}(v_2)H_{\alpha_3}(v_3),\,\,\alpha\in\mathbb{N}^3,$$
and for $x\in\mathbb{R}$,
$$H_{n}(x)=\frac{(-1)^n}{\sqrt{2^nn!\pi}}e^{\frac{x^2}{2}}\frac{d^n}{dx^n}(e^{-x^2})
=\frac{1}{\sqrt{2^nn!\pi}}\Big(x-\frac{d}{dx}\Big)^n(e^{-\frac{x^2}{2}}).$$
For the harmonic oscillator
$\mathcal{H}=-\triangle +\frac{|v|^2}{4}$ of 3-dimension and $s>0$,
we have
$$
\mathcal{H}^{\frac{k}{2}} H_{\alpha} = (\lambda_{\alpha})^{\frac{k}{2}}H_{\alpha},\,\, \lambda_{\alpha}=\sum^3_{j=1}(\alpha_j+\frac{1}{2}),\,\,k\in\mathbb{N},\,\alpha\in\mathbb{N}^3.
$$
The symmetric weighted Sobolev space $Q^{2\tau}(\mathbb{R}^3)$
can be also characterized through the decomposition
into the Hermite basis :
\begin{align}\label{Q2tau}
\begin{split}
f\in Q^{2\tau}(\mathbb{R}^3)
&\Leftrightarrow\,f\in\,L^2(\mathbb{R}^3),\,\,
\Bigl\| \, \Bigl(e+\mathcal{H}\Bigr)^{\tau} \, f \, \Bigr\|_{L^2}<+\infty;
\\
&\Leftrightarrow\, f\in\,L^2(\mathbb{R}^3),\,\,
\Big\|\,\Big(
|\alpha|^{\tau} (f,\,H_{\alpha})_{L^2}
\Big)_{\alpha\in\mathbb{N}^3}\Big\|_{l^2}<+\infty.
\end{split}
\end{align}
Concerning the Sobolev-type space
$E^{\tau}_{\nu}(\mathbb{R}^3)$ for $\nu>0$ introduced in part 1,
we have the following property :
\begin{proposition}\label{sobolev-type}
Let $0<\nu<2$ and $\tau>0$.
There exists a constant $C=C_{\nu}$ such that, 
for ant $f\in E^{\tau}_{\nu}(\mathbb{R}^3)$,
\[
\forall k\geq 1,\quad
\Bigl\|\,\Bigl(-\Delta+{\textstyle \frac{|v|^2}{4}}\Bigr)^{\frac{k}{2}} \, f\,\Bigr\|_{L^2}\
\leq
  \, e^{C \, \big(\frac{1}{\tau}\big)^{\frac{\nu}{2-\nu}}  \, k^{\frac{2}{2-\nu}}}
  \, \|\,f\,\|_{E^{\tau}_{\nu}}.
\]
\end{proposition}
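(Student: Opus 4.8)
The plan is to work in the Hermite eigenbasis, where everything diagonalizes. Write $f = \sum_{\alpha\in\mathbb{N}^3} f_\alpha H_\alpha$ with $f_\alpha = (f,H_\alpha)_{L^2}$, and recall $\mathcal{H}H_\alpha = \lambda_\alpha H_\alpha$ with $\lambda_\alpha = |\alpha| + \tfrac32$. Then by Parseval
\[
\Bigl\|\,\Bigl(-\Delta+{\textstyle\frac{|v|^2}{4}}\Bigr)^{\frac{k}{2}} f\,\Bigr\|_{L^2}^2
= \sum_{\alpha} \lambda_\alpha^{k}\, |f_\alpha|^2,
\qquad
\|f\|_{E^\tau_\nu}^2 = \sum_\alpha e^{2\tau(\log(e+\lambda_\alpha))^{\frac{2}{\nu}}}|f_\alpha|^2.
\]
So it suffices to prove the pointwise bound on the multipliers: there is $C=C_\nu$ such that for all $k\geq 1$, all $\tau>0$ and all $\lambda\geq \tfrac32$,
\[
\lambda^{k} \leq e^{2C\,(1/\tau)^{\frac{\nu}{2-\nu}} k^{\frac{2}{2-\nu}}}\; e^{2\tau(\log(e+\lambda))^{\frac{2}{\nu}}},
\]
i.e. after taking logarithms and writing $L = \log(e+\lambda)\geq 1$ (note $\log\lambda \leq L$),
\[
k L \leq 2C\,(1/\tau)^{\frac{\nu}{2-\nu}} k^{\frac{2}{2-\nu}} + 2\tau L^{\frac{2}{\nu}}.
\]

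The key step is the elementary Young-type inequality: for conjugate exponents, $ab \leq \frac{a^p}{p} + \frac{b^q}{q}$. I would apply this to the product $k\cdot L$ after inserting a free parameter. Precisely, fix the exponents $p = \frac{2}{2-\nu}$ and $q = \frac{2}{\nu}$ (which are conjugate since $\frac{1}{p}+\frac{1}{q} = \frac{2-\nu}{2}+\frac{\nu}{2}=1$, and both exceed $1$ since $0<\nu<2$). For any $\delta>0$ write $kL = (\delta^{-1}k)\cdot(\delta L) \leq \frac{\delta^{-p}}{p}k^{p} + \frac{\delta^{q}}{q}L^{q}$. Now choose $\delta$ so that the coefficient of $L^q = L^{2/\nu}$ matches $2\tau$: that is $\frac{\delta^q}{q} = 2\tau$, equivalently $\delta = (2q\tau)^{1/q}$. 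Then the first term becomes $\frac{1}{p}(2q\tau)^{-p/q}k^{p} = C_\nu (1/\tau)^{p/q} k^{p}$, and one checks $p/q = \frac{\nu}{2-\nu}$ and $p = \frac{2}{2-\nu}$, so we get exactly
\[
kL \leq C_\nu\,(1/\tau)^{\frac{\nu}{2-\nu}} k^{\frac{2}{2-\nu}} + 2\tau L^{\frac{2}{\nu}},
\]
with $C_\nu = \frac{2-\nu}{2}(2\cdot\frac{2}{\nu})^{-\frac{\nu}{2-\nu}}$. Absorbing the factor $2$ into the constant gives the claimed multiplier inequality, and summing over $\alpha$ against $|f_\alpha|^2$ finishes the proof.

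There is essentially no serious obstacle here; the only mildly delicate point is bookkeeping of the exponents so that the power of $1/\tau$ and of $k$ come out as $\frac{\nu}{2-\nu}$ and $\frac{2}{2-\nu}$ respectively, and making sure the comparison $\log\lambda \leq \log(e+\lambda)$ (used to pass from the genuine power weight $\lambda^k$ to the logarithm $L$) does not cost anything — it does not, since $\lambda^k = e^{k\log\lambda} \leq e^{kL}$. One should also note that for $k$ in a bounded range or $\lambda$ bounded the inequality is trivially true after enlarging $C_\nu$, so restricting to $k\geq 1$ and $\lambda\geq \tfrac32$ as above is harmless. Thus the statement follows by combining the Hermite-basis characterization \eqref{Q2tau} of $Q^{2\tau}(\mathbb{R}^3)$, the spectral definition of $E^\tau_\nu(\mathbb{R}^3)$, and the scalar Young inequality with the optimized parameter $\delta$.
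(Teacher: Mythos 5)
Your proof is correct and follows essentially the same route as the paper's: expand in the Hermite eigenbasis of $\mathcal{H}$, reduce to a scalar multiplier inequality comparing $\lambda^k$ with $e^{2\tau(\log(e+\lambda))^{2/\nu}}$, and establish it by Young's inequality with the conjugate exponents $p=\frac{2}{2-\nu}$, $q=\frac{2}{\nu}$ and the parameter tuned so that the coefficient of $(\log(e+\lambda))^{2/\nu}$ equals $2\tau$. The paper phrases this as a lower bound on the auxiliary function $h_{\tau,k}(x)=e^{2\tau(\log x)^{2/\nu}}x^{-k}$, but the computation and the resulting constant are the same as yours.
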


\begin{proof}
We expand $f$ in the Hermite basis : noting $f_\alpha = (f,H_\alpha)_{L^2}$, 
we get
\begin{align*}
\sum_{\alpha}
e^{2\tau \, \left(\log(e+\lambda_\alpha)\right)^{\frac{2}{\nu}}} |f_\alpha|^2
= \|\,f\,\|_{E^{\tau}_{\nu}}^2.
\end{align*}
We rephrase the previous identity as follows
\begin{align*}
\sum_{\alpha\in\mathbb{N}^3}
h_{\tau,k}(e+\lambda_\alpha) \, (e+\lambda_\alpha)^k |f_\alpha|^2
= \|\,f\,\|_{E^{\tau}_{\nu}}^2
\end{align*}
where
$$h_{\tau,k}(x) = \frac{e^{2\tau\,\left(\log x\right)^{\frac{2}{\nu}}}}{x^k}.$$
It is easy to check that
\begin{equation}\label{young}
\forall x\geq1, \quad h_{\tau,k}(x) \geq
e^{-\frac{2-\nu}{2}\big(\frac{\nu}{4\tau}\big)^{\frac{\nu}{2-\nu}}k^{\frac{2}{2-\nu}}}.
\end{equation}
Indeed, using Young's inequality
$$xy\leq \frac{1}{p}x^p+\frac{1}{q}y^q,
\,\,\text{where}\,\,\frac{1}{p}+\frac{1}{q}=1,$$
with $p=\frac{2}{2-\nu}$, $q=\frac{2}{\nu}$,
we obtain
$$k \, \log x
\leq
\frac{2-\nu}{2} \, 
\Big[\Big(\frac{\nu}{4\tau}\Big)^{\frac{\nu}{2}} k\Big]^{\frac{2}{2-\nu}} \, 
+ \, 2\tau \, \bigl[\log x\bigr]^{\frac{2}{\nu}},$$
and \eqref{young} follows immediately.
Therefore we deduce that
\begin{align*}
\|\,f\,\|_{E^{\tau}_{\nu}}^2
&=
\sum_{\alpha\in\mathbb{N}^3}
h_{\tau,k} (e+\lambda_{\alpha})\, (e+\lambda_{\alpha})^k \, |f_{\alpha}|^2 \\
&\geq
e^{-\frac{2-\nu}{2}\big(\frac{\nu}{4\tau}\big)^{\frac{\nu}{2-\nu}}k^{\frac{2}{2-\nu}}} \,
\sum_{\alpha\in\mathbb{N}^3}
(e+\lambda_\alpha)^k \, |f_\alpha|^2
\end{align*}
and using \eqref{Q2tau} we conclude the proof.
\end{proof}

We now give a proof of Lemma 2.3 in \cite{HAOLI}.
\begin{lemma} \label{lem estim Pl}
 (\cite{HAOLI})
We have
\begin{equation*}%\label{estim Pl}
1 - P_{l}\Bigl(\cos\frac{\theta}{l}\Bigr) = O(\theta^2)
\end{equation*}
uniformly for $l\geq 1$ and $\theta\in [0, \frac{\pi}{2}]$.
\end{lemma}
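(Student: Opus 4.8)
\emph{Proof proposal.} The plan is to reduce the bound to a one–variable estimate obtained by differentiating the map $\theta\mapsto P_l\bigl(\cos\frac{\theta}{l}\bigr)$ a single time. The only external ingredient needed is a uniform bound on the derivative of the Legendre polynomial on $[-1,1]$, namely
$$\forall\,y\in[-1,1],\qquad |P_l'(y)|\le l^2 ,$$
which follows from the classical Markov inequality $\sup_{[-1,1]}|p'|\le(\deg p)^2\,\sup_{[-1,1]}|p|$ applied to $p=P_l$, since $\sup_{[-1,1]}|P_l|=1$.

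First I would write, using $P_l(1)=1$ and the fundamental theorem of calculus,
$$1-P_l\Bigl(\cos\tfrac{\theta}{l}\Bigr)=-\int_0^\theta\frac{d}{ds}P_l\Bigl(\cos\tfrac{s}{l}\Bigr)\,ds=\int_0^\theta\frac1l\,\sin\tfrac{s}{l}\;P_l'\Bigl(\cos\tfrac{s}{l}\Bigr)\,ds .$$
Then I would estimate the integrand pointwise, using $\bigl|\sin\tfrac{s}{l}\bigr|\le\tfrac{s}{l}$ together with the derivative bound above:
$$\Bigl|\tfrac1l\sin\tfrac{s}{l}\,P_l'\bigl(\cos\tfrac{s}{l}\bigr)\Bigr|\le\tfrac1l\cdot\tfrac{s}{l}\cdot l^2=s ,$$
uniformly for $l\ge1$ and $s\ge0$. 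Integrating over $s\in[0,\theta]$ gives
$$\Bigl|1-P_l\Bigl(\cos\tfrac{\theta}{l}\Bigr)\Bigr|\le\frac{\theta^2}{2},$$
which is the claimed bound $O(\theta^2)$, uniform in $l\ge1$ (in fact valid for every $\theta\ge0$).

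The step I expect to require the most care is the uniform derivative bound. Here I would either invoke the Markov inequality as above, or instead use the sharper classical fact that for Legendre polynomials $\max_{[-1,1]}|P_l'|=P_l'(1)=\tfrac{l(l+1)}{2}$, which yields the slightly better constant $1-P_l\bigl(\cos\tfrac{\theta}{l}\bigr)\le\tfrac{l+1}{4l}\,\theta^2\le\tfrac{\theta^2}{2}$; everything else is elementary. If one prefers a fully self-contained argument, an alternative is to start from Laplace's first integral $P_l(\cos\phi)=\tfrac1\pi\int_0^\pi(\cos\phi+i\sin\phi\cos t)^l\,dt$, set $z=\cos\phi+i\sin\phi\cos t$ with $\phi=\theta/l$, and split $1-\mathrm{Re}(z^l)=(1-|z|^l)+|z|^l\bigl(1-\cos(l\arg z)\bigr)$, bounding $1-|z|^l\le\tfrac l2\sin^2\phi\sin^2 t$ and $1-\cos(l\arg z)\le\tfrac12(l\arg z)^2$ with $|\arg z|\le\tan\phi\lesssim\phi$; but the one-variable argument above is shorter and cleaner.
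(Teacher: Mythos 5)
Your proof is correct, but it takes a genuinely different route from the paper's. The paper never touches $P_l'$ directly: it uses the Legendre differential equation in the form $\frac{1}{\sin t}\frac{d}{dt}\bigl(\sin t\,\frac{dw}{dt}\bigr)+l(l+1)w=0$ for $w(t)=P_l(\cos t)$, rescales to $u(\theta)=1-P_l(\cos\frac{\theta}{l})$, integrates the self-adjoint form once to get $\sin(\frac{\theta}{l})\,u'(\theta)=O(\frac{\theta^2}{l})$, divides by $\sin(\frac{\theta}{l})\gtrsim\frac{\theta}{l}$ (which is where the restriction $\theta\in[0,\frac{\pi}{2}]$ enters), and integrates again. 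The only input is $|P_l|\le1$ plus the ODE. Your argument instead integrates once via the fundamental theorem of calculus and imports the uniform derivative bound $\sup_{[-1,1]}|P_l'|\le l^2$ from Markov's inequality (or the sharper $P_l'(1)=\frac{l(l+1)}{2}$, which is correct and is indeed where the maximum of $|P_l'|$ is attained). What your version buys is an explicit constant, $1-P_l(\cos\frac{\theta}{l})\le\frac{\theta^2}{2}$, valid for all $\theta\ge0$ with no restriction to $[0,\frac{\pi}{2}]$; what the paper's version buys is self-containedness, since it needs no external inequality beyond the defining ODE and $|P_l|\le1$. Both are complete proofs of the stated uniform estimate.
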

\begin{proof}
We recall that the Legendre polynomial $w(t)=P_l(\cos t)$ satisfies
\[
\frac{1}{\sin t} \, \frac{d}{dt}\left(
\sin t \, \frac{d w}{dt}
\right) + l\,(l+1)\, w = 0,
\quad w(0)=1, \quad |w|\leq 1.
\]
Let us define the new function for $\theta\in [0, \frac{\pi}{2}]$
\[u(\theta) = 1 - P_{l}\Bigl(\cos\frac{\theta}{l}\Bigr). \]
The function $u$ is solution of the differential equation
\[
\frac{d}{d\theta}\left( \sin\left(\frac{\theta}{l}\right) \, \frac{d u}{d\theta} \right)
= \frac{l+1}{l} \, \sin\left(\frac{\theta}{l}\right) \, w
= O\left( \frac{\theta}{l} \right).
\]
We integrate on the interval $[0,\theta]$ and we get
\[
\sin\left(\frac{\theta}{l}\right) \, \frac{d u}{d\theta}
= O\left( \frac{\theta^2}{l} \right).
\]
Therefore
\[ \frac{d u}{d\theta} = O( \theta ). \]
Since $u(0)=0$, another integration finishes the proof of the estimate.
\end{proof}

\section*{\bf Acknowledgments}
The first author would like to express his sincere thanks to School of mathema\-tics and statistics of Wuhan University for the invitation.
The second author deeply {\color{black}thanks} Professor Chao-Jiang Xu for his valuable discussion all along this work.

\end{document}